%
%
%
%

\UseRawInputEncoding
\documentclass{amsart}

\usepackage{graphicx}
\usepackage{xcolor}
\usepackage{amsmath,amsthm,amstext,amssymb,amsfonts,latexsym}

\newtheorem*{thmb}{Theorem B}
\newtheorem*{thmc}{Theorem A}
\newtheorem{thm}{Theorem}[section]

\newtheorem{lem}[thm]{Lemma}
\newtheorem{cor}[thm]{Corollary}
\newtheorem{prop}[thm]{Proposition}

\theoremstyle{definition}
\newtheorem{defi}{Definition}

\newtheorem*{conj}{CPE Conjecture}

\theoremstyle{remark}
\newtheorem{remark}[thm]{Remark}

\numberwithin{equation}{section}



\allowdisplaybreaks[1]



\begin{document}

\title{Critical Metrics of the Volume Functional on three-Dimensional Manifolds }

\author{Huiya He*}
\address{Center of Mathematical Sciences, Zhejiang University, Hangzhou 310027, P. R. China}
\email{hehuiya2019@163.com}

\thanks{}

\date{}
\keywords{}
	
\begin{abstract}
In this paper, we prove the three-dimensional $CPE$ conjecture with non-negative Ricci curvature. Moreover, we establish a classification result on three-dimensional vacuum static space with non-negative Ricci curvature. Finally, we show that  a three-dimensional compact, oriented, connected Miao-Tam critical metric with smooth boundary, non-negative Ricci curvature and  non-negative potential function is isometric to a geodesic ball in a simply connected space form $\mathbb{R}^3$ or $\mathbb{S}^3$. 

\end{abstract}
	
\maketitle
	

\section{Introduction}
\label{Sec:Intro}



Let $(M^n, g)$  be a compact, oriented, connected Riemannian manifold with dimension $n$ at least three,  $\mathcal{M}$ be the set of Riemannian metrics on $M^n$ of unitary volume,  $\mathcal{C}\subset\mathcal{M}$ be the set of Riemannian metrics with constant scalar curvature. Define the total scalar curvature functional $\mathcal{R}:\mathcal{M}\rightarrow\mathbb{R}$ as
\begin{eqnarray*}  
\mathcal{R}(g)=\int_{M^n}R_gdM_g,
\end{eqnarray*}
where  $R_g$ is the scalar curvature on $M^n$. 
It is well-known that the formal $L^2$-adjoint of the linearization of the scalar curvature operator ­$\mathfrak{L}_g$ at $g$ is defined as
\begin{eqnarray*} 
\mathfrak{L}^*_g(f):=-(\Delta_g f)g+Hess_g f-fRic_g,
\end{eqnarray*}  
where $f$ is a smooth function on $M^n$, and $\Delta_g$, $Hess_g$ and  $Ric_g$ stand for the Laplacian, the Hessian form and  the Ricci curvature tensor on $M^n$, respectively.  
An Euler-Lagrange equation is given by
\begin{eqnarray}  \label{guanhe10002}
\mathfrak{L}^*_g(f)=Ric_g-\frac{R_g}{n}g.
\end{eqnarray}

As the terminology used in \cite{B1987,CHY2014,QY2013}, a $CPE$ metric can be defined as follows.

\begin{defi}\label{CPE}
A $CPE$ metric is a three-tuple $(M^n,g,f)$$(n\geq 3)$, where $(M^n,g)$ is a compact (without boundary), oriented Riemannian manifold  with constant scalar curvature and $f:M^n\rightarrow\mathbb{R}$ is a non-constant smooth function satisfying equation \eqref{guanhe10002}. Such a function $f$ is called a potential function.
\end{defi}

Besse \cite{B1987} first posed the following conjecture on $CPE$ metrics:

\begin{conj}
A $CPE$ metric is always Einstein.
\end{conj}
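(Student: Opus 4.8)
The plan is to reduce the conjecture to the vanishing of the trace-free Ricci tensor $\mathring{Ric}:=Ric_g-\frac{R_g}{n}g$ and then attack that vanishing through a weighted integral identity. First I would rewrite the defining equation \eqref{guanhe10002}. Expanding $\mathfrak{L}^*_g(f)=Ric_g-\frac{R_g}{n}g$ gives $Hess_g f=(\Delta_g f-\frac{R_g}{n})g+(1+f)Ric_g$; taking the trace and using that $R_g$ is constant yields the auxiliary equation
\[
\Delta_g f=-\frac{R_g}{n-1}\,f,
\]
while the trace-free part collapses to the fundamental identity $\mathring{Hess}_g f=(1+f)\,\mathring{Ric}$. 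Since a $CPE$ metric is Einstein exactly when $\mathring{Ric}\equiv 0$, the whole conjecture is equivalent to proving that this tensor vanishes for every non-constant potential $f$.

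Second, I would produce an integral identity for $\int_{M^n}(1+f)\lvert\mathring{Ric}\rvert^2\,dM_g$. Taking the divergence of the fundamental identity and invoking the contracted second Bianchi identity (with $R_g$ constant) shows that both sides merely reproduce $\mathring{R}_{jk}\nabla^k f$, so one derivative carries no new information; I would instead contract the identity against $\mathring{Ric}$ and integrate by parts a second time. Commuting the covariant derivatives then brings in the full curvature tensor, and after sorting the terms through the standard decomposition of the curvature into its Weyl part and its Ricci (Schouten) part, the right-hand side organizes into a Cotton-tensor piece and a Weyl piece. Schematically,
\[
\int_{M^n}(1+f)\lvert\mathring{Ric}\rvert^2\,dM_g=c_n\!\int_{M^n} f\,W\big(\nabla f,\cdot,\nabla f,\cdot\big)\,dM_g+(\text{Cotton terms}),
\]
where, via $C_{ijk}=\nabla_k\mathring{R}_{ij}-\nabla_j\mathring{R}_{ik}$, the Cotton terms can be recast as a manifestly signed quantity $\int_{M^n} f\lvert C\rvert^2\,dM_g$ up to divergence terms that integrate to zero on the closed manifold.

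Third, I would try to force the right-hand side to have the correct sign. In dimension three the Weyl tensor vanishes identically, so the Weyl piece disappears and only the Cotton contribution survives; combined with sign control on the factor $1+f$ — obtained by applying the maximum principle to $\Delta_g f=-\frac{R_g}{n-1}f$, supplemented where available by a curvature hypothesis such as $Ric_g\ge 0$ — one reaches $\int_{M^n}(1+f)\lvert\mathring{Ric}\rvert^2\,dM_g\le 0$ with $1+f>0$, hence $\mathring{Ric}\equiv 0$. This is precisely the mechanism underlying the three-dimensional results announced in the abstract.

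The hard part, and the reason the conjecture remains open in full generality, is the Weyl term in dimensions $n\ge 4$. There is no a priori sign for $\int_{M^n} f\,W(\nabla f,\cdot,\nabla f,\cdot)\,dM_g$, and without a supplementary structural assumption — harmonic Weyl tensor (equivalently $\mathrm{div}\,W=0$), (half-)conformal flatness, or a pointwise curvature sign — I see no route to making this contribution vanish or become definite. Pinning down the factor $1+f$, that is, ruling out $f$ dipping below $-1$, is a secondary obstacle that the non-negative Ricci hypothesis dissolves but that is otherwise delicate. Thus the strategy above genuinely closes only under one of these extra hypotheses; the unconditional statement would demand a fundamentally new estimate controlling the Weyl contribution.
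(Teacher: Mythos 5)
You are right about the status of this statement: it is an open conjecture, and the paper does not prove it either. What the paper proves is Theorem \ref{thmcpe}, the three-dimensional case under $Ric_g\geq 0$, and your diagnosis of the obstruction in general --- the unsigned Weyl contribution for $n\geq 4$ --- is a fair account of why only special cases are known. So the real question is whether your sketch of the three-dimensional mechanism matches the paper's actual proof. It does not, and it has concrete gaps.

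First, your claim that the maximum principle applied to $\Delta_g f=-\frac{R_g}{n-1}f$ yields $1+f>0$ is false: this is an eigenfunction-type equation, the shifted potential $\tilde f$ integrates to zero when $R>0$ and must change sign, and ruling out $\tilde f<-1$ is precisely Hwang's hypothesis --- a genuine assumption, not a consequence. The paper sidesteps this entirely by working with $\mathrm{div}\bigl(f^2\nabla|Ric|^2\bigr)$, so that the weight is $(1+\tilde f)^2\geq 0$ automatically, and then invoking Hwang's result (Proposition \ref{prop}) that $\{\tilde f=-1\}$ has measure zero to upgrade vanishing almost everywhere to vanishing identically. Second, the hypothesis $Ric\geq 0$ does not enter through the sign of the term $\int_{M^3}(1+\tilde f)|\mathring{Ric}|^2\,dM_g$ at all --- that integral is identically zero by the contracted Bianchi identity and integration by parts. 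Where $Ric\geq 0$ is actually used is in the pointwise algebraic inequality $6R_{ij}R_{jk}R_{ki}-5R|Ric|^2+R^3\geq 0$ (Lemma \ref{guanhelem00003}, proved by an eigenvalue computation), which controls the cubic curvature terms in the divergence identity \eqref{000031}; your schematic identity omits these cubic terms entirely, so the step where curvature positivity closes the argument is missing. Third, even granting the integral inequality, what it delivers is $C\equiv 0$ and $\nabla Ric\equiv 0$ --- local conformal flatness and parallel Ricci --- not $\mathring{Ric}\equiv 0$ directly. To reach Einstein (and then $\mathbb{S}^3$ by Obata-type rigidity), the paper still needs $R>0$ (from $\tilde f$ non-constant), Cheng's classification of compact locally conformally flat manifolds with constant scalar curvature and $Ric\geq 0$ (Proposition \ref{prop2}), and a separate explicit computation excluding the product $\mathbb{S}^2(c)\times\mathbb{S}^1$. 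Your sketch jumps over all three of these steps.
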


In last decades, many researchers  tried to solve this conjecture, but only partial results were proved. 
In 1983, Lafontaine \cite{L1983} showed the $CPE$ conjecture under a locally conformally flat assumption and $Ker\ \mathfrak{L}^*_g(f)\neq 0$.
Chang, Hwang and Yun \cite{CHY2014} avoided the condition on $Ker\ \mathfrak{L}^*_g(f)$.
In 2000, Hwang \cite{H2000} obtained the conjecture provided $f\geq-1$.
In 2013, Qing and Yuan \cite{QY2013} obtained a positive answer for Bach-flat $CPE$ metrics manifolds. 
  For more references on $CPE$ metrics, see  \cite{CHY2012,CHY2016,FY2018,L2015}  and references therein.

When the dimension $n=3$, Baltazar \cite{B2017} proved that
the $CPE$ conjecture is true for three-dimensional manifolds with non-negative sectional curvature.
In this paper, we are able to prove that the $CPE$ conjecture is true for three-dimensional manifolds with non-negative Ricci curvature.

One of our main results is the following:
\begin{thm}\label{thmcpe}
Let $(M^3, g, f)$ be a three-dimensional compact, oriented, connected $CPE$ metric with non-negative Ricci curvature. Then $M^3$  is
isometric to a standard sphere $\mathbb{S}^3$.
\end{thm}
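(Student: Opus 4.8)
The plan is to show that the traceless Ricci tensor $\mathring{Ric}:=Ric_g-\frac{R}{3}g$ vanishes identically; once this is done, $g$ is Einstein, hence of constant sectional curvature in dimension three, and a short argument with Obata's theorem upgrades this to the round metric. First I would extract the standard consequences of the critical point equation \eqref{guanhe10002}. Tracing it gives $\Delta_g f=-\frac{R}{2}f$, so that $f$ is a nonconstant eigenfunction; since the first eigenvalue of $-\Delta_g$ is positive this forces $R>0$ and $\int_{M}f\,dM=0$. Rewriting \eqref{guanhe10002} in trace-free form yields the fundamental identity $Hess\,f-\frac{\Delta_g f}{3}g=(1+f)\mathring{Ric}$, while the contracted second Bianchi identity (using that $R$ is constant) gives $\mathrm{div}\,\mathring{Ric}=0$. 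Pairing the trace-free identity with $\mathring{Ric}$ and integrating by parts then produces the first key relation $\int_{M}(1+f)|\mathring{Ric}|^2\,dM=0$.

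The next step uses that the Weyl tensor vanishes in dimension three, so the full curvature is recovered from $Ric_g$. Differentiating the trace-free equation, antisymmetrizing, and substituting the three-dimensional expression of the Riemann tensor in terms of Ricci, I would derive a structural identity expressing the Cotton tensor $C_{ijk}=\nabla_i R_{jk}-\nabla_j R_{ik}$ through $\nabla f$ and $\mathring{Ric}$, schematically $(1+f)C_{ijk}=\nabla_i f\,\mathring R_{jk}-\nabla_j f\,\mathring R_{ik}+(\text{traces in }\mathring{Ric}\,\nabla f)$. Contracting this with $\nabla^i f\,\mathring R^{jk}$ (equivalently, computing the weighted $L^2$-norm of $C$) and integrating by parts against the trace-free equation, the cubic invariant $\tr(\mathring{Ric}^3)$ enters, giving an integral identity that ties together $\int_M |C|^2$, $\int_M f|\mathring{Ric}|^2$ and $\int_M (1+f)\tr(\mathring{Ric}^3)$.

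The role of the curvature hypothesis is the following pointwise algebraic fact. If $\lambda_1,\lambda_2,\lambda_3$ denote the eigenvalues of $\mathring{Ric}$ at a point, then $\sum_i\lambda_i=0$ and $Ric_g\geq0$ is equivalent to $\lambda_i+\frac{R}{3}\geq0$ for each $i$; hence
\[
\tr(\mathring{Ric}^3)+\frac{R}{3}|\mathring{Ric}|^2=\sum_{i}\lambda_i^2\Big(\lambda_i+\frac{R}{3}\Big)\geq0 .
\]
Because $\tfrac{R}{3}\int_M(1+f)|\mathring{Ric}|^2\,dM=0$ by the first step, adding a suitable multiple of this vanishing integral converts the cubic term in the integral identity into the manifestly nonnegative combination above. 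The aim is to arrange the bookkeeping so that the whole identity reads as a sum of nonnegative integrals equal to zero, forcing both $C\equiv0$ and $\mathring{Ric}\equiv0$.

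I expect the main obstacle to be precisely this last arrangement. The natural Bochner formula for $\mathring{Ric}$ in dimension three carries the curvature term $\sum_{i<j}K_{ij}(\lambda_i-\lambda_j)^2$, where the $K_{ij}$ are sectional curvatures; this is what Baltazar controls using \emph{non-negative sectional curvature}, and it is \emph{not} signed by Ricci non-negativity alone, since $K_{ij}=\frac12(r_i+r_j-r_k)$ may be negative even when every Ricci eigenvalue $r_i=\lambda_i+\frac{R}{3}$ is nonnegative. The heart of the proof is therefore to absorb this term, together with the $\nabla f$-contributions coming from the Cotton identity and the weight $(1+f)$, into an expression controlled only by $\lambda_i+\frac{R}{3}\geq0$, so that non-negative Ricci suffices. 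Once $\mathring{Ric}\equiv0$ is established, the trace-free equation becomes $Hess\,f=\frac{\Delta_g f}{3}g=-\frac{R}{6}f\,g$, which is exactly Obata's equation; since $f$ is nonconstant and $R>0$, Obata's theorem identifies $(M^3,g)$ with a round sphere $\mathbb{S}^3$.
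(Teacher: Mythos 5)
Your proposal assembles several ingredients that do match the paper's argument (the trace identities, the relation $\int_M(1+f)|\mathring{Ric}|^2\,dM=0$, the Cotton tensor identity, and a pointwise cubic inequality valid under $Ric\geq 0$), but it has a genuine gap at exactly the point you flag yourself: the integral identity that is supposed to become "a sum of nonnegative integrals equal to zero" is never derived, and you explicitly defer it as "the heart of the proof." The paper closes this by a concrete computation you do not perform: expanding $\mathrm{div}\bigl(f^2\nabla|Ric|^2\bigr)$ with $f=1+\tilde f$ and using the Cotton tensor relations, the curvature contribution comes out \emph{exactly} as $3f^2\bigl(6R_{ij}R_{jk}R_{ki}-5R|Ric|^2+R^3\bigr)+6\kappa f|\mathring{Ric}|^2$ with $\kappa=-\tfrac{R}{3}$, plus $2f^2|C|^2+2f^2|\nabla Ric|^2$ and a pure divergence. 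The cubic term equals $6\,\mathrm{tr}(\mathring{Ric}^3)+R|\mathring{Ric}|^2$ and is pointwise nonnegative under Ricci nonnegativity alone (the paper's Lemma 2.4, proved by diagonalizing Ricci), while the $|\mathring{Ric}|^2$ term integrates to zero by your first relation. So the sectional-curvature-weighted Bochner term you worry about simply never appears in this route; but without producing this identity, your plan has no engine.

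There is a second, independent gap in your endgame. Even granting the identity, the terms forced to vanish are $f^2|C|^2$ and $f^2|\nabla Ric|^2$, which (via Hwang's result that $\{\tilde f=-1\}$ has measure zero) give local conformal flatness and $\nabla Ric\equiv 0$ — \emph{not} $\mathring{Ric}\equiv 0$. Note that the $|\mathring{Ric}|^2$ term enters with coefficient $6\kappa f$, whose integral vanishes, so it forces nothing. Your plan to conclude Einstein directly and then invoke Obata skips a real step: a compact, locally conformally flat three-manifold with parallel Ricci, constant scalar curvature and nonnegative Ricci need not be Einstein, as $\mathbb{S}^{2}(c)\times\mathbb{S}^{1}$ shows. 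The paper handles this by invoking Cheng's classification (round space form or $\mathbb{S}^{2}(c)\times\mathbb{S}^{1}$) and then excluding the cylinder by a separate argument with the potential function: on $\mathbb{S}^{2}(c)\times\mathbb{S}^{1}$ the CPE equation forces $\tilde f_{,11}=\tilde f_{,22}=\tfrac{c}{3}$, so integrating $\Delta_{\mathbb{S}^2\times\{q\}}\tilde f$ over a slice gives a nonzero value, a contradiction. Your proposal contains no mechanism for this exclusion, and it cannot be avoided by bookkeeping in the integral identity alone.
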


%
%

In this article, we also pay attention to  another kind of critical point equation. 
As in \cite{BR2018,CEM2013, KS2018},  we say that $g$ is a $V$-static metric if there is a smooth function $f$ on $M^{n}$ and a constant $\kappa$ satisfying the $V$-static equation
\begin{eqnarray} \label{vstatic}
\mathfrak{L}^*_g(f)=\kappa g.
 \end{eqnarray}  

On the one hand, when $\kappa= 0$ in \eqref{vstatic}, as in \cite{HE1975, KO1981}, we can have the definition of vacuum static space.

\begin{defi}A vacuum static space is a three-tuple $(M^n,g,f)$$(n\geq 3)$, where $(M^n,g)$ is a compact, oriented, connected Riemannian manifold  with a smooth boundary $\partial M$ and $f:M^n\rightarrow \mathbb{R}$ is a smooth function such that $f^{-1}(0)=\partial M$ satisfying the overdetermined-elliptic system 
\begin{eqnarray} \label{static10001}
\mathfrak{L}^*_g(f)=0.
 \end{eqnarray}  
 Such a function $f$ is called a potential function.

\end{defi}

Kobayashi and Obata \cite{KO1981} (for dimension $n=3$, see \cite{L1980}) proved that  a static metric
$g$ is isometric to a warped-product metric of constant scalar curvature near
the hypersurface $f^{-1}(c)$ for a regular value $c$ provided $g$ is locally conformally flat.
In the paper \cite{FM1975}, Fischer and Marsden showed the local surjectivity for the scalar curvature as a map from the space of
metrics to the space of functions at a non-static metric on a closed manifold and  raised the possibility of identifying all compact vacuum static spaces.

Obata \cite{O1962} showed that the only compact Einstein manifold which satisfies \eqref{static10001} is the standard sphere. 
 Locally conformally flat vacuum static spaces have been completely classified in \cite{K1982} and \cite{L1983} independently.
 When dimension $n=3$, as Ambrozio state in \cite{A2017}, the classification of locally conformally flat vacuum static spaces is as follows.

\begin{thmc}[Kobayashi \cite{K1982}, Lafontaine \cite{L1983}] \label{theoremc}

 Let $(M^{3}, g, f)$ be a vacuum static space with positive scalar curvature. If $(M^{3}, g)$ is locally conformally flat, then $(M^{3}, g, f)$ is covered by a static space that is equivalent to one of the following  tuples:
 
i) The standard round hemisphere
$$
\left(\mathbb{S}_{+}^{3}, g_{\text{can}}, f=x_{n+1}\right),
$$
\indent\indent where $g_{\text {can }}$ is the metric of standard unit round sphere.

ii) The standard cylinder over $\mathbb{S}^{2}$ with the product metric
$$
\left(\left[0, \frac{\pi}{\sqrt{3}}\right] \times \mathbb{S}^{2}, g_{\text {prod}}=d t^{2}+\frac{1}{3} g_{\text {can }}, f=\frac{1}{\sqrt{3}} \sin (\sqrt{3} t)\right).
$$

iii) For some $m \in\left(0, \frac{1}{3 \sqrt{3}}\right),$ the tuple
$$
\left(\left[r_{h}(m), r_{c}(m)\right] \times \mathbb{S}^{2}, g_{m}=\frac{d r^{2}}{1-r^{2}-\frac{2 m}{r}}+r^{2} g_{\text {can }}, f_{m}=\sqrt{1-r^{2}-\frac{2 m}{r}}\right),
$$
\indent\indent where $r_{h}(m)<r_{c}(m)$ are the positive zeroes of $f_{m}$.

\end{thmc}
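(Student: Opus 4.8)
The plan is to use local conformal flatness to reduce the overdetermined system \eqref{static10001} to an ordinary differential equation along the gradient flow of $f$, and then integrate it. First I would record the pointwise consequences of \eqref{static10001}. Writing the equation as $Hess_g f = fRic_g + (\Delta_g f)g$ and tracing gives $\Delta_g f = -\frac{R}{2}f$; since a nontrivial solution of $\mathfrak{L}^*_g(f)=0$ forces $R$ to be constant (here positive), the potential obeys a fixed eigenvalue equation. Removing the trace turns the static equation into its trace-free form $\mathring{Hess}_g f = f\,\mathring{Ric}_g$, the identity that links the second derivatives of $f$ to the geometry of its level sets.

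Next I would invoke that in dimension three local conformal flatness is equivalent to the vanishing of the Cotton tensor, which, as $R$ is constant, says exactly that $Ric_g$ is a Codazzi tensor: $\nabla_i R_{jk}=\nabla_j R_{ik}$. Differentiating $Hess_g f = fRic_g+(\Delta_g f)g$ once more and commuting covariant derivatives, the Codazzi symmetry together with $\mathring{Hess}_g f = f\,\mathring{Ric}_g$ should force, on the open set where $\nabla f\neq 0$, that $\nabla f$ is an eigenvector of $Ric_g$, that the two eigenvalues orthogonal to $\nabla f$ coincide, and that $|\nabla f|$ is constant on each regular level set of $f$. This is the technical heart of the proof: one must show that the level sets $f^{-1}(c)$ are umbilic with constant intrinsic curvature, and separately analyse the critical set of $f$, where the rotationally symmetric structure degenerates.

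Granting this, the metric is locally a warped product $g = \frac{dr^2}{V(r)} + r^2 g_{\mathrm{can}}$ over a constant curvature surface, with $f=f(r)$ and $V=|\nabla f|^2$; passing to a suitable cover globalises this, and positivity of $R$ together with the compactness and closure conditions restricts the fibre to the round $\mathbb{S}^2$. Substituting into \eqref{static10001} collapses the system to a single second order ODE for $V$, whose first integral is of Schwarzschild type, $V = 1 - r^2 - \frac{2m}{r}$ for a constant $m$.

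Finally I would run the phase plane analysis of this ODE subject to $f^{-1}(0)=\partial M$ and the smoothness requirements at the ends of the $r$-interval. A smooth pole, i.e. a critical point of $f$ where the spherical fibre collapses, forces $m=0$ and yields the round hemisphere (i); the branch on which the warping factor is constant produces the product metric on the cylinder over $\mathbb{S}^2$ (ii); and a genuinely non-constant $V$ with two simple positive zeros $r_h(m)<r_c(m)$ gives the family $g_m$, the range $m\in(0,\tfrac{1}{3\sqrt3})$ being exactly where $1-r^2-\frac{2m}{r}$ has two positive zeros bounding an interval on which it is positive (iii). I expect the two delicate points to be the extraction of the rotationally symmetric structure in the second step and, at the end, the verification that no boundary or pole configuration other than these three is compatible with the ODE and the condition that $f$ vanish precisely on $\partial M$.
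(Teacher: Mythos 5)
First, a point of order: the paper contains no proof of this statement for you to be compared against. Theorem A is imported verbatim, in Ambrozio's formulation \cite{A2017}, from Kobayashi \cite{K1982} and Lafontaine \cite{L1983}, and inside the paper it functions purely as a black box: it is combined with the local conformal flatness established for $V$-static metrics to yield Corollary \ref{thmstatic}. So your proposal can only be measured against the classical proofs, and in outline it does reproduce their route: constancy of $R$; the trace and trace-free forms $\Delta_g f=-\frac{R}{2}f$ and $\mathring{Hess}_g f=f\,\mathring{Ric}_g$; the equivalence, in dimension three with $R$ constant, of local conformal flatness with $Ric_g$ being a Codazzi tensor; umbilicity of the regular level sets and constancy of $|\nabla f|$ along them; reduction to a warped product over an interval with round $\mathbb{S}^{2}$ fibres; and the Schwarzschild--de Sitter first integral, with $m\in\left(0,\frac{1}{3\sqrt{3}}\right)$ being exactly the window in which $1-r^2-\frac{2m}{r}$ has two positive roots. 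These are the right steps in the right order.

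Two substantive criticisms. First, a concrete error: in the ansatz $g=\frac{dr^2}{V(r)}+r^2 g_{\mathrm{can}}$ you identify $V=|\nabla f|^2$. The correct identification, visible in all three model cases, is $f=\sqrt{V}$ (e.g.\ $f_m=\sqrt{1-r^2-\frac{2m}{r}}$), whereas $|\nabla f|^2=V(f')^2=\frac{1}{4}(V')^2$; on the hemisphere, for instance, $V=1-r^2=f^2$ while $|\nabla f|^2=r^2$. With your identification, substituting into \eqref{static10001} produces the wrong ODE, so the advertised first integral would not appear; this must be corrected before the final step makes sense. Second, the step you yourself flag with ``should force'' --- that the Codazzi property of $Ric_g$ together with $\mathring{Hess}_g f=f\,\mathring{Ric}_g$ makes $\nabla f$ a Ricci eigenvector, equalizes the two tangential eigenvalues, and makes $|\nabla f|$ constant on level sets --- is not a technicality to be granted: it is the actual content of the Kobayashi--Lafontaine theorem. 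The same applies to the analysis at critical points of $f$ and to the exclusion of flat or hyperbolic fibres (negative-mass solutions $V=k-r^2-\frac{2m}{r}$ with $k\le 0$ do exist locally and are ruled out only by the compactness and smoothness hypotheses, via the curvature singularity at $r=0$). As written, your text is a correct plan that names the right lemmas rather than a proof of them.
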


Based on a similar idea from \cite{CC2012,CCCMM2014}, Qing and Yuan \cite{QY2013} obtained a classification result for Bach-flat vacuum static spaces in any dimension.
Ambrozio \cite{A2017} got some classification results for compact simply connected static three-manifolds with positive scalar curvature.
Baltazar-Ribeiro \cite{BR2018} gave a classification theorem for  compact static three-manifolds with non-negative sectional  curvature.
For more information, see  \cite{HZ2018,L2009,QY2016,Y2015}  and references therein.

In this paper, we are able to prove the following theorem on three-dimensional V-static space with non-negative Ricci curvature.
\begin{thm}\label{vstatic}
Let $(M^3, g, f)$ be a three-dimensional compact, oriented, connected V-static metric with smooth boundary $\partial M$ and non-negative Ricci curvature.
If $f$ and $\kappa$ satisfy one of the following conditions:
 \begin{itemize}
		\item [(i)] $\kappa\geq 0$ and $f>0$;
		\item [(ii)] $\kappa\leq 0$ and $f<0$.	
		 \end{itemize}			
Then $M^3$ is locally conformally flat.
\end{thm}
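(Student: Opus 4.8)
The plan is to prove local conformal flatness by showing that the Cotton tensor $C$ of $(M^3,g)$ vanishes identically, since in dimension three the Weyl tensor is automatically zero and the vanishing of $C$ is equivalent to local conformal flatness. I would begin with two reductions. Taking the divergence of the $V$-static equation $\mathrm{Hess}\,f-(\Delta f)g-f\,\mathrm{Ric}=\kappa g$ and using the contracted second Bianchi identity $\mathrm{div}(\mathrm{Ric})=\tfrac12\nabla R$, the commutator term in $\mathrm{div}(\mathrm{Hess}\,f)$ cancels against $\mathrm{div}(f\,\mathrm{Ric})$ and one is left with $-\tfrac12 f\nabla R=0$; since under either hypothesis (i) or (ii) the function $f$ never vanishes, $R$ is constant. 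Tracing the equation gives $\Delta f=-\tfrac12(Rf+3\kappa)$, and subtracting off the trace part recasts the equation as the clean traceless identity
\begin{equation*}
\mathrm{Hess}\,f-\tfrac13(\Delta f)\,g=f\,T,\qquad T:=\mathrm{Ric}-\tfrac{R}{3}g.
\end{equation*}

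Next I would derive the fundamental identity linking $C$ to $T$ and $\nabla f$. Differentiating the traceless equation, commuting the third derivatives of $f$ via the Ricci identity, and then substituting the three-dimensional expression of the full curvature tensor in terms of $\mathrm{Ric}$ (Weyl being zero) and of $\nabla\Delta f$ in terms of $\nabla f$, the curvature contributions organize (with $R$ constant) into a pointwise identity of the schematic form
\begin{equation*}
f\,C_{ijk}=2\big(T_{ik}\nabla_j f-T_{jk}\nabla_i f\big)+\big(g_{ik}T_{jl}-g_{jk}T_{il}\big)\nabla^l f.
\end{equation*}
Contracting with $C^{ijk}$ and using that $C$ is totally trace-free annihilates the metric terms and leaves the clean relation $f|C|^2=4\,C^{ijk}T_{ik}\nabla_j f$. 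I would also record the purely algebraic fact $C^{ijk}\nabla_j T_{ik}=-\tfrac12|C|^2$, which follows from the antisymmetry $C_{ijk}=-C_{jik}$ after antisymmetrizing $\nabla_j T_{ik}$ and recognizing $\nabla_i T_{jk}-\nabla_j T_{ik}=C_{ijk}$.

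The heart of the argument is then an integration over $M^3$. Starting from $\int_M f|C|^2=4\int_M C^{ijk}T_{ik}\nabla_j f$ and integrating by parts in the $\nabla f$ direction, the term $C^{ijk}\nabla_j T_{ik}$ regenerates a multiple of $\int_M f|C|^2$ through the relation above, so that after rearrangement
\begin{equation*}
\int_M f|C|^2\,dM=-4\int_{\partial M} f\,C^{ijk}\nu_j\,T_{ik}\,dS+4\int_M f\,(\nabla_j C^{ijk})\,T_{ik}\,dM.
\end{equation*}
Inserting the three-dimensional divergence formula for the Cotton tensor (with $R$ constant, $\nabla_j C^{ijk}$ is quadratic in $\mathrm{Ric}$, hence in $T$ and $R$) converts the bulk term into an integral whose integrand is built from $T$, $R$ and $\nabla f$, while the boundary term is disposed of by the boundary behavior of $f$. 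The goal is to show this right-hand side has the sign opposite to that of $\int_M f|C|^2$ (which is $\ge 0$ in case (i), $\le 0$ in case (ii)); one then forces $\int_M f|C|^2=0$, and since $f$ is strictly of one sign on the interior, $C\equiv 0$, i.e. $(M^3,g)$ is locally conformally flat.

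The hardest step is the last one: massaging the bulk integral $\int_M f(\nabla_j C^{ijk})T_{ik}$ into a manifestly sign-definite expression. This is precisely where $\mathrm{Ric}\ge 0$ and the matched signs of $f$ and $\kappa$ in (i)/(ii) are indispensable, as they must dominate the indefinite cubic-in-$T$ contributions (terms such as $\mathrm{tr}(T^3)$ and $R|T|^2$) produced by the Cotton divergence, and simultaneously control the boundary integral over $\partial M$. The three-dimensional commutation bookkeeping is routine but sign-sensitive; the genuine obstacle is arranging the final integrand so that \emph{non-negative Ricci} curvature alone suffices, which is the improvement over the non-negative \emph{sectional} curvature hypothesis used in earlier work.
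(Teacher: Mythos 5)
Your preliminary reductions are sound and in fact coincide with the paper's: constancy of $R$ is Lemma \ref{0010000}, your identity $fC_{ijk}=2(T_{ik}\nabla_jf-T_{jk}\nabla_if)+(g_{ik}T_{jl}-g_{jk}T_{il})\nabla^lf$ is Lemma \ref{10000} combined with $W\equiv 0$ (the paper's $fC_{kij}=T_{kij}$), and the contraction $f|C|^2=4C^{ijk}T_{ik}\nabla_jf$ is exactly the paper's \eqref{000000031}. The gap lies in the step you yourself call the heart of the argument. Your claim that, with $R$ constant, $\nabla_jC^{ijk}$ is ``quadratic in $\mathrm{Ric}$'' is false for the index position you use. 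Commuting derivatives and using $\nabla^jT_{jk}=0$ (constant $R$), the divergence of the Cotton tensor over the index contracted with $\nabla f$ satisfies
\begin{equation*}
\nabla^{j}C_{ijk}=-\Delta T_{ik}+\big(\text{terms algebraic in curvature}\big),
\end{equation*}
so it contains second derivatives of the Ricci tensor; only the divergence over the \emph{last} index is algebraic in curvature (and in dimension three it vanishes identically). Consequently your bulk integral $4\int_M f(\nabla_jC^{ijk})T_{ik}\,dM$ is not ``built from $T$, $R$ and $\nabla f$'': the piece $-4\int_M f\langle T,\Delta T\rangle\,dM$ must be integrated by parts again, producing $4\int_M f|\nabla \mathrm{Ric}|^2\,dM$ plus cross terms $\nabla f\ast\nabla T$, which then have to be converted through the $V$-static equation into cubic curvature terms and $\kappa|\mathring{Ric}|^2$ terms. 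Carried out correctly, this reconstructs precisely the paper's Bochner-type identity \eqref{000031} for $\mathrm{div}(f^2\nabla|Ric|^2)$; as written, your key displayed formula misdescribes what the bulk term contains, and the argument built on it would fail.

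Second, even granting that repair, the decisive step is the one you explicitly defer (``arranging the final integrand so that non-negative Ricci alone suffices''). That step is a concrete algebraic inequality, the paper's Lemma \ref{guanhelem00003}: in dimension three, $\mathrm{Ric}\geq 0$ implies $6R_{ij}R_{hj}R_{hi}-5R|Ric|^2+R^3\geq 0$ (equivalently, $6\,\mathrm{tr}(\mathring{Ric}^3)+R|\mathring{Ric}|^2\geq 0$), proved by diagonalizing the Ricci tensor and a short elimination. With it, integrating \eqref{000031} --- all boundary terms vanish because both divergence terms carry a factor $f^2$ and $f=0$ on $\partial M$ --- yields \eqref{1010101010101}, namely $0\geq 6\kappa\int_Mf|\mathring{Ric}|^2+2\int_Mf^2|C|^2+2\int_Mf^2|\nabla Ric|^2$, and hypotheses (i) or (ii) give $\kappa f\geq 0$, forcing $C\equiv 0$ and hence local conformal flatness. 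Your proposal names the right objects (cubic trace terms, $R|T|^2$, the role of $\kappa f$), but it proves neither the actual structure of the bulk term nor the sign inequality on which the theorem rests, so it is an outline rather than a proof, and its central technical claim is incorrect as stated.
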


Specifically, when $\kappa=0$, we can get the following corollary.
\begin{cor}\label{thmstatic}
Let $(M^3, g, f)$ be a three-dimensional compact, oriented, connected vacuum static space with smooth boundary $\partial M$, non-negative Ricci curvature and positive $f$. Then $(M^{3}, g, f)$ is covered by a static space that is equivalent to one of the following tuples:
 
i) The standard round hemisphere
$$
\left(\mathbb{S}_{+}^{3}, g_{\text{can}}, f=x_{n+1}\right),
$$
\indent\indent where $g_{\text {can }}$ is the metric of standard unit round sphere.

ii) The standard cylinder over $\mathbb{S}^{2}$ with the product metric
$$
\left(\left[0, \frac{\pi}{\sqrt{3}}\right] \times \mathbb{S}^{2}, g_{\text {prod}}=d t^{2}+\frac{1}{3} g_{\text {can }}, f=\frac{1}{\sqrt{3}} \sin (\sqrt{3} t)\right).
$$

%

\end{cor}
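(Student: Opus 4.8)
The plan is to obtain the corollary by specializing Theorem \ref{vstatic} to the case $\kappa = 0$ and then combining the resulting local conformal flatness with the classification in Theorem A. When $\kappa = 0$ the $V$-static equation \eqref{vstatic} is exactly the vacuum static equation \eqref{static10001}, and the hypotheses $\kappa = 0 \geq 0$ and $f > 0$ put us into case (i) of Theorem \ref{vstatic}. So the first step is simply to invoke Theorem \ref{vstatic} and conclude that $(M^3, g)$ is locally conformally flat.

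To be allowed to apply Theorem A I must check that the scalar curvature is a positive constant. For constancy I would take the divergence of \eqref{static10001}: writing $\mathfrak{L}^*_g(f) = -(\Delta f)g + \mathrm{Hess}\,f - f\,\mathrm{Ric}$, commuting derivatives and using the contracted second Bianchi identity $\mathrm{div}(\mathrm{Ric}) = \tfrac{1}{2}dR$ yields $f\,dR = 0$, so that $R$ is constant on the interior where $f>0$. Tracing \eqref{static10001} in dimension three gives $\Delta f = -\tfrac{R}{2}f$. For positivity of $R$ I would argue by the maximum principle: since $f > 0$ in the interior and $f = 0$ on $\partial M = f^{-1}(0)$, if $R \leq 0$ then $\Delta f = -\tfrac{R}{2}f \geq 0$ makes $f$ subharmonic, forcing its maximum onto the boundary and contradicting that $f$ is positive inside (the borderline case $R = 0$ makes $f$ harmonic with zero boundary data, hence $f \equiv 0$, again impossible). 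Therefore $R > 0$.

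With local conformal flatness and $R > 0$ established, Theorem A applies and shows that $(M^3, g, f)$ is covered by one of the three model tuples (i), (ii), (iii). The final step is to eliminate the Schwarzschild-type family (iii) using the hypothesis $\mathrm{Ric} \geq 0$. For the rotationally symmetric metric $g_m = \tfrac{dr^2}{V} + r^2 g_{\mathrm{can}}$ with $V = 1 - r^2 - \tfrac{2m}{r}$, I would compute the Ricci curvature in the orthonormal frame adapted to $\partial_r$ and the round $\mathbb{S}^2$; the radial component is $\mathrm{Ric}(\partial_r,\partial_r) = -\tfrac{V'}{r} = 2 - \tfrac{2m}{r^3}$. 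Evaluating at the inner zero $r = r_h(m)$ of $V$, where $1 - r_h^2 - \tfrac{2m}{r_h} = 0$ gives $2m = r_h - r_h^3$, together with the fact that $r_h < \tfrac{1}{\sqrt{3}}$ (forced by $m \in (0,\tfrac{1}{3\sqrt{3}})$), shows $\tfrac{2m}{r_h^3} > 2$, so $\mathrm{Ric}(\partial_r,\partial_r) < 0$ at $r = r_h$. This contradicts $\mathrm{Ric} \geq 0$, so case (iii) cannot occur, leaving only the hemisphere (i) and the cylinder (ii), which is the assertion.

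I expect the curvature computation ruling out (iii) to be the main obstacle. One must correctly normalize the metric in the frame, locate the inner root $r_h(m)$, and translate the constraint $m \in (0,\tfrac{1}{3\sqrt{3}})$ into the inequality $r_h < \tfrac{1}{\sqrt{3}}$; the delicate point is that $\mathrm{Ric} \geq 0$ must fail somewhere on $[r_h(m), r_c(m)]$, and pinning this down cleanly at the horizon $r = r_h$ via the relation $2m = r_h - r_h^3$ is what closes the argument. By contrast, checking that the surviving models (i) and (ii) genuinely satisfy $\mathrm{Ric} \geq 0$ is immediate, since the round hemisphere has positive Ricci curvature and the product $dt^2 + \tfrac{1}{3}g_{\mathrm{can}}$ has Ricci curvature vanishing in the $t$-direction and positive along $\mathbb{S}^2$.
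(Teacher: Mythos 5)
Your proposal is correct and follows the same route as the paper: the paper's entire proof of this corollary is the single sentence that, when $\kappa=0$, it ``can be directly deduced from Theorem \ref{vstatic} and Theorem A.'' What you add beyond the paper are precisely the details that this one-line proof leaves implicit: (a) the verification that the scalar curvature is a positive constant, which is needed even to invoke Theorem A (constancy also follows from the paper's Lemma \ref{0010000}, and your maximum-principle argument via $\Delta f=-\tfrac{R}{2}f$ for positivity is sound); and (b) the exclusion of the Schwarzschild--de Sitter family (iii) of Theorem A, which must be ruled out since the corollary lists only the hemisphere and the cylinder. Your computation for (b) checks out: for $g_m$ the Ricci curvature in the unit radial direction is $-V'/r = 2 - 2m/r^3$ with $V=1-r^2-2m/r$, and at the inner root $r_h$ the relation $2m=r_h-r_h^3$ together with $r_h<1/\sqrt{3}$ (which holds because $\phi(r)=r-r^3$ attains its maximum $\tfrac{2}{3\sqrt 3}$ at $r=1/\sqrt 3$, so the smaller root of $\phi(r)=2m$ lies to its left) gives radial Ricci curvature $3-1/r_h^2<0$; since the covering in Theorem A is a local isometry, this contradicts $\mathrm{Ric}\geq 0$ on $M^3$. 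So your write-up is, if anything, more complete than the paper's own argument.
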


On the other hand, when $\kappa\neq 0$ in \eqref{vstatic}, in \cite{MT2009}, Miao and Tam have studied the volume functional on the space of constant scalar curvature metrics with a prescribed boundary metric, and derived sufficient or necessary condition for a metric to be a critical point.
As in \cite{KS2018,MT2011}, we can define Miao-Tam critical metric as follows.
\begin{defi}A Miao-Tam critical metric is a three-tuple $(M^n,g,f)$$(n\geq 3)$, where $(M^n,g)$ is a compact, oriented, connected Riemannian manifold  with a smooth boundary $\partial M$ and $f:M^n\rightarrow \mathbb{R}$ is a smooth function such that $f^{-1}(0)=\partial M$ satisfying the overdetermined-elliptic system 
\begin{eqnarray} \label{guanhe10001}
\mathfrak{L}^*_g(f)=g.
 \end{eqnarray}  
 Such a function $f$ is called a potential function.

\end{defi}

In  \cite{MT2011}, Miao-Tam studied these critical metrics under Einstein condition. More precisely, they obtained the following result.

\begin{thmb}[Miao-Tam \cite{MT2011}]\label{theoremb}
Let $(M^n, g, f)$ be a connected, compact Einstein Miao-Tam critical metric with smooth boundary $\partial M$, then $M^n$ is isometric to a geodesic ball in a simply connected space form $\mathbb{R}^n$, $\mathbb{H}^n$ or $\mathbb{S}^n$.
\end{thmb}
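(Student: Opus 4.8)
The plan is to use the Einstein hypothesis to reduce the Miao--Tam system \eqref{guanhe10001} to a single Obata--Tashiro--type equation for the potential function $f$, and then to invoke the classical rigidity attached to such an equation.

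First I would record that an Einstein metric satisfies $Ric_g=\lambda g$ with $\lambda=\frac{R_g}{n}$ a constant, and substitute this into the critical point equation $-(\Delta_g f)g+Hess_g f-f\,Ric_g=g$. Taking the trace of the resulting identity gives the scalar relation $(n-1)\Delta_g f=-\,n(1+\lambda f)$, which expresses $\Delta_g f$ as an affine function of $f$. Reinserting this relation into the full tensorial equation cancels the Laplacian term and leaves the pointwise identity $Hess_g f=-\frac{1+\lambda f}{n-1}\,g$. This is the decisive step: the Hessian of $f$ is a (variable) scalar multiple of the metric, so $\nabla f$ is a closed conformal vector field.

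Next I would interpret $Hess_g f=\varphi\, g$, with $\varphi=-\frac{1+\lambda f}{n-1}$ affine in $f$, as the equation studied by Obata and Tashiro. Setting $u=f+\frac1\lambda$ when $\lambda\neq 0$ turns it into $Hess_g u=-\frac{\lambda}{n-1}\,u\,g$, the standard concircular equation; the sign of $\lambda$ (equivalently of $R_g$) selects the model --- $\lambda>0$ gives the sphere-type equation, $\lambda<0$ the hyperbolic-type equation, and $\lambda=0$ gives $Hess_g f=-\frac{1}{n-1}g$, the Euclidean quadratic case. In each case, away from the critical set of $f$ the level sets of $f$ are totally umbilical with $\lvert\nabla f\rvert$ constant on them, so the metric is a warped product $dr^2+h(r)^2\,g_N$ over a level set $N$, and the warping function $h$ is forced by the constant-curvature/Einstein condition to solve a second-order ODE whose admissible solutions are exactly $\sin$, $\sinh$, or a linear profile --- the profiles of $\mathbb{S}^n$, $\mathbb{H}^n$, and $\mathbb{R}^n$.

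Finally I would use the boundary condition $f^{-1}(0)=\partial M$ together with compactness. Since $f$ is non-constant and $M$ is compact with boundary on which $f$ vanishes, $f$ attains an interior extremum; analyzing $Hess_g f=\varphi g$ in geodesic polar coordinates centered at such a critical point shows that $f$ depends only on the distance $r$ to that point and that $g$ is rotationally symmetric, so the region bounded by $\partial M=f^{-1}(0)$ is a metric ball about the critical point in the relevant space form, i.e.\ a geodesic ball. I expect the main obstacle to be this last rigidity step: one must control the critical set of $f$ (showing there is a single interior critical point and that $\nabla f\neq 0$ on $\partial M$), adapt the Obata--Tashiro classification --- stated classically for complete boundaryless manifolds --- to the present manifold-with-boundary setting, and verify that the connected, oriented hypotheses upgrade the local warped-product structure to a global isometry with a geodesic ball.
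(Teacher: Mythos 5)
The paper itself contains no proof of this statement: Theorem B is quoted directly from Miao--Tam \cite{MT2011} and is used as a black box in the proof of Theorem \ref{thmmiao}, so there is no in-paper argument to compare against. Your proposal is, in substance, a reconstruction of the original Miao--Tam proof: the trace-and-resubstitute reduction of \eqref{guanhe10001} under $Ric_g=\lambda g$ to the single equation $Hess_g f=-\frac{1+\lambda f}{n-1}\,g$ is exactly their first step (and your algebra there is correct), and the subsequent Obata--Tashiro/warped-product rigidity adapted to the boundary condition $f^{-1}(0)=\partial M$ is how they conclude, with the points you flag as obstacles (control of the critical set of $f$, nonvanishing of $\nabla f$ on $\partial M$, passing from the local warped-product structure to a global geodesic ball) being precisely the details worked out in \cite{MT2011}.
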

In the same article they obtained the same result under the assumption that the metric is locally conformally flat instead of Einstein. 
Based on a work of Cao-Chen \cite{CC2013}, Barros-Diogenes-Ribeiro \cite{BDR2015} showed that a 4-dimensional Bach-flat simply connected, compact Miao-Tam critical metric with boundary isometric to a standard sphere $\mathbb{S}^3$ must be isometric to a geodesic ball in a simply connected space form $\mathbb{R}^4$, $\mathbb{H}^4$ or $\mathbb{S}^4$.


It is important to note that Baltazar-Ribeiro \cite{BR2018} provided a general Bochner type formula to show that a three-dimensional compact, oriented, connected Miao-Tam critical metric $(M^3, g, f)$ with smooth boundary $\partial M$ and non-negative sectional curvature, with $f$ assumed to be non-negative, is isometric to a geodesic ball in a simply connected space form $\mathbb{R}^3$ or $\mathbb{S}^3$. 
We refer to \cite{ Bd2017,BDRR2017,BR2017,SW2019} for more related results.




Similar to our theorem on $CPE$ metric, we can establish the following result.
\begin{thm}\label{thmmiao}
Let $(M^3, g, f)$ be a three-dimensional compact, oriented, connected Miao-Tam critical metric with smooth boundary $\partial M$, non-negative Ricci curvature and non-negative $f$. Then $M^3$ is isometric to a geodesic ball in a simply connected space form $\mathbb{R}^3$ or $\mathbb{S}^3$.
\end{thm}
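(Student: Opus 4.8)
The plan is to observe that a Miao-Tam critical metric is exactly a $V$-static metric with $\kappa=1$, so that the main analytic work is already contained in Theorem \ref{vstatic}, and then to identify the space form using the Miao-Tam classification together with the curvature hypothesis.

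First, I would note that equation \eqref{guanhe10001}, namely $\mathfrak{L}^*_g(f)=g$, is precisely the $V$-static equation \eqref{vstatic} with $\kappa=1$. Since $f\geq 0$ and $f^{-1}(0)=\partial M$, the potential satisfies $f>0$ on the interior $M^3\setminus\partial M$. I would also record that the metric has constant scalar curvature: taking the divergence of \eqref{guanhe10001} and using the contracted second Bianchi identity together with the Ricci commutation formula yields $f\,\nabla R=0$, whence $\nabla R=0$ on $\{f>0\}$ and, by continuity, $R$ is constant on $M^3$. In dimension three this reduces the Cotton tensor to $C_{ijk}=\nabla_i R_{jk}-\nabla_j R_{ik}$, whose vanishing is equivalent to local conformal flatness.

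With $\kappa=1\geq 0$ and $f>0$ in the interior, condition (i) of Theorem \ref{vstatic} holds there, so I would conclude that $C_{ijk}\equiv 0$ on $M^3\setminus\partial M$; since $C$ is a continuous tensor vanishing on a dense open set, it vanishes on all of $M^3$, and hence $(M^3,g)$ is locally conformally flat. At this stage I would invoke the Miao-Tam classification in the locally conformally flat case: a compact, connected Miao-Tam critical metric that is locally conformally flat is isometric to a geodesic ball in a simply connected space form $\mathbb{R}^3$, $\mathbb{H}^3$ or $\mathbb{S}^3$. Finally, the non-negative Ricci hypothesis rules out the hyperbolic case, since a geodesic ball in $\mathbb{H}^3$ inherits constant sectional curvature $-1$ and thus $Ric=-2g<0$; therefore $M^3$ is isometric to a geodesic ball in $\mathbb{R}^3$ or $\mathbb{S}^3$.

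The main obstacle is the gap between the strict positivity $f>0$ assumed throughout Theorem \ref{vstatic} and the present situation, where $f$ vanishes on $\partial M$. The resolution rests on the fact that the conclusion of Theorem \ref{vstatic} is a pointwise tensorial identity ($C\equiv 0$) rather than a mere integral estimate: establishing it on the open set $\{f>0\}$ already forces it on all of $M^3$ by continuity. I would therefore check that the Bochner-type argument underlying Theorem \ref{vstatic} genuinely yields the pointwise vanishing of the Cotton tensor wherever $f>0$, and is not an integral identity that might acquire uncontrolled boundary contributions along $\partial M=\{f=0\}$.
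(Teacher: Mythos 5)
Your route is genuinely different from the paper's. You extract local conformal flatness from (the proof of) Theorem \ref{vstatic} and then appeal to a Miao--Tam classification of locally conformally flat critical metrics. The paper never passes through conformal flatness: it derives a second divergence identity weighted by $f$ instead of $f^2$,
\[
\mathrm{div}\left( f\nabla |Ric|^2 \right) = 4fR_{ij}R_{jk}R_{ki} - 4fR_{ij}R_{hk}R_{ikjh} - 4R_{ij}f_{,k}C_{kij} + \tfrac{2n\kappa}{n-1}|\mathring{Ric}|^2 + 2f|\nabla Ric|^2,
\]
in which, for $n=3$ and $\kappa=1$, the trace-free Ricci term enters as $3|\mathring{Ric}|^2$ with \emph{no} factor of $f$. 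Integrating (the boundary terms vanish because $f=0$ on $\partial M$) and using \eqref{000000031}, \eqref{000000032} and Lemma \ref{guanhelem00003} yields $0 \geq \int_{M^3} f\bigl(2|\nabla Ric|^2 + |C|^2\bigr)\,dM_g + 3\int_{M^3}|\mathring{Ric}|^2\,dM_g$, so $M^3$ is Einstein pointwise, and Theorem B concludes. As for your flagged concern: it is well placed, and the check does succeed --- in the $f^2$-weighted identity \eqref{000031} every boundary contribution ($\int_{\partial M}f^2\,\partial_\nu|Ric|^2$ and $\int_{\partial M}f^2R_{ij}C_{kij}\nu_k$) carries a factor $f^2$, so integrating over all of $M^3$ (not over the non-compact interior, where Theorem \ref{vstatic} cannot be applied as a black box) gives $f^2|C|^2\equiv 0$, hence $C\equiv 0$ everywhere by continuity.

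The genuine gap is your final step. The statement you invoke --- that a compact, connected, locally conformally flat Miao--Tam critical metric is a geodesic ball in a space form --- is stronger than what Miao--Tam actually proved: their locally conformally flat theorem in \cite{MT2011} assumes in addition that the manifold is simply connected and that the boundary is isometric to a standard round sphere $\mathbb{S}^{n-1}$ (this is how it is quoted, e.g., in \cite{BDR2015} and \cite{BDRR2017}). The present paper's introduction paraphrases it without these hypotheses, but you have neither of them at your disposal. Fortunately the repair is immediate and collapses your argument onto the paper's: since $\kappa=1>0$ and $f\geq 0$, the same integrated identity you used also forces $\int_{M^3} f|\mathring{Ric}|^2\,dM_g=0$, so $\mathring{Ric}\equiv 0$ on the interior and hence, by continuity, on all of $M^3$; thus $M^3$ is Einstein and Theorem B applies with no extra boundary hypothesis, after which $Ric\geq 0$ rules out $\mathbb{H}^3$ exactly as you observed.
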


\begin{remark}
As a direct corollary of Theorem \ref{vstatic},  a classification result of Miao-Tim critical metric under the condition of $f>0$ can be deduced. In Theorem \ref{thmmiao}, we optimize this condition to  $f\geq 0$. 

\end{remark}

%
%
%
%
%
%
%
%
%

The organization of the paper is as follows. In section 2, we unify the form of  equation of $CPE$ metric, Miao-Tam critical metric and vacuum static space. Moreover,  we
state four lemmas as the preparation. In section 3, we give the proof of our main theorems.

{Acknowledgement:} The author is very grateful to Professor Haizhong Li and Professor Hongwei Xu for their guidance and constant support.


\section{Preliminaries}

Let $(M^n,g)(n\geq3)$ be an $n$-dimensional  compact, orientable Riemannian manifold. 
In what follows, we adopt, without further comment, the moving frame notation.
For any  $p\in M^n$, we choose $e_1,\ldots,e_n$ as a local orthonormal frame field at $p$, $\omega_1,\ldots,\omega_n$ as its dual coframe field,  $g_{ij}=\delta_{ij}$. 
Here and hereafter the Einstein convention of summing over the repeated indices will be adopted. 

The
decomposition of the Riemannian curvature tensor  into irreducible components yields
\begin{eqnarray}\label{00003}
R_{ijkl}&=&W_{ijkl}+\frac{1}{n-2}(R_{ik}\delta_{jl}-R_{il}\delta_{jk}+R_{jl}\delta_{ik}-R_{jk}\delta_{il})\nonumber\\
&&-\frac{R}{(n-1)(n-2)}(\delta_{ik}\delta_{jl}-\delta_{il}\delta_{jk}),
\end{eqnarray}
where $R_{ijkl}$ are the components of the Riemannian curvature tensor, $W_{ijkl}$ are the components of the Weyl tensor, $  R_{ij}:=\sum_{kl}R_{ikjl}g_{kl}$  are components of Ricci curvature tensor and $R:=\sum_{ij}R_{ij}g_{ij}$ is the scalar curvature of $M^n.$



Let $\phi=\sum_{i,j}\phi_{ij}\omega_i  \otimes      \omega_j$ be a symmetric (0,2)-type tensor defined on $M^n.$   By letting  $\phi_{ij,k}:=\nabla_k\phi_{ij},$ $\phi_{ij,kl}:=\nabla_l\nabla_k\phi_{ij},$ where $\nabla$ is the operator of covariant differentiation on $M^n,$
 we have the following Ricci identities
\begin{eqnarray}
\phi_{ij,kl}-\phi_{ij,lk}=\phi_{mj}R_{mikl}+\phi_{im}R_{mjkl}.\label{11028}
  \end{eqnarray}

The norm of a $(0,4)$-type tensor $T$ is defined as $$|T|^2=|T_{ijkl}|^2=T_{ijkl}T_{ijkl}.$$

By the second Bianchi identity
\begin{equation*}
R_{imkl,j}+R_{imlj,k}+R_{imjk,l}=0,
  \end{equation*}
we have
\begin{equation}\label{11002}
R_{ij,k}-R_{ik,j}=R_{likj,l}
  \end{equation}
and
\begin{equation}\label{11003}
R_{ik,i}=\frac{1}{2}R_{,k}.
  \end{equation}

A Cotton tensor $C_{ijk}$ is given by
\begin{eqnarray}\label{11005}
C_{ijk}=R_{jk,i}-R_{ik,j}-\frac{1}{2(n-1)}(R_{,i}\delta_{jk}-R_{,j}\delta_{ik}).
  \end{eqnarray}


From Definition \ref{CPE} we know that a $CPE$ metric $(M^n, g, \tilde{f})$ satisfies
\begin{eqnarray} \label{00000000001}
-(\Delta\tilde{f})\delta_{ij}+\tilde{f}_{,ij}-\tilde{f}R_{ij}=Ric-\frac{R}{n}\delta_{ij},
 \end{eqnarray}  
where $R$ is a constant.
Replacing $\tilde{f}$ by $f-1$, we can rewrite \eqref{00000000001}  as
\begin{eqnarray} \label{00004}
-(\Delta f)\delta_{ij}+f_{,ij}-fR_{ij}=-\frac{R}{n}\delta_{ij},
 \end{eqnarray}  
 where $-\frac{R}{n}$ is a constant. Obviously, \eqref{00004} is  a V-static equation. Then we can unify the equation of $CPE$ metric, Miao-Tam critical metric and vacuum static space  into the following form:
\begin{eqnarray} \label{00005}
-(\Delta f)\delta_{ij}+f_{,ij}-fR_{ij}=\kappa\delta_{ij},
 \end{eqnarray}  
where $\kappa$ is a constant. By tracing the above formula, we can get
 \begin{eqnarray} \label{000066}
\Delta f=\frac{fR+n\kappa}{1-n}.
 \end{eqnarray}  
Substituting \eqref{000066} into \eqref{00005}, we have
\begin{eqnarray} \label{000077}
fR_{ij}-f_{,ij}-\frac{fR}{n-1}\delta_{ij}=\frac{\kappa}{n-1}\delta_{ij}.
 \end{eqnarray}

Now we present four lemmas as the preparation  for proving our main theorems. 
\begin{lem}\label{0010000}
Let $(M^n,g)(n\geq3)$ be an $n$-dimensional Riemannian manifold, $f$ be a smooth solution to \eqref{00005}.     Then $g$ has constant scalar curvature.
 \end{lem}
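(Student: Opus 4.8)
The plan is to extract the scalar-curvature gradient by taking the divergence of the static-type equation. I would start from the trace-adjusted form \eqref{000077}, namely $fR_{ij}-f_{,ij}-\frac{fR}{n-1}\delta_{ij}=\frac{\kappa}{n-1}\delta_{ij}$, precisely because the constant $\kappa$ sits on the right-hand side and will drop out upon differentiation. Applying $\sum_i\nabla_i$ to both sides and contracting is the single computational engine of the argument, and the goal is to see every term collapse into a multiple of $f\nabla R$.

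For the individual terms I would use the contracted second Bianchi identity \eqref{11003}, which gives $\sum_i R_{ij,i}=\tfrac12 R_{,j}$, so that $\sum_i\nabla_i(fR_{ij})=R_{ij}f_{,i}+\tfrac12 fR_{,j}$. The Hessian term is handled by the commutation (Ricci) identity \eqref{11028} applied to $df$, which yields the standard divergence-of-Hessian formula $\sum_i\nabla_i f_{,ij}=\nabla_j\Delta f+R_{jm}f_{,m}$. The two Ricci contributions then cancel, since $R_{ij}f_{,i}=R_{jm}f_{,m}$ by symmetry of the Ricci tensor. Finally I would substitute the trace relation \eqref{000066} in the form $\nabla_j\Delta f=-\tfrac{1}{n-1}\nabla_j(fR)$ to eliminate $\Delta f$; after collecting the $f_{,j}R$ and $fR_{,j}$ terms, every contribution except one cancels and one is left with the clean identity $\tfrac12 fR_{,j}=0$, that is, $f\,\nabla R=0$ on all of $M^n$.

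The remaining point, which I expect to be the only genuine obstacle, is to pass from $f\nabla R=0$ to $\nabla R\equiv 0$. On the open set $\{f\neq0\}$ one has $\nabla R=0$ immediately, so the issue is a hypothetical region where $f$ vanishes identically while $\nabla R\neq0$. There, inserting $f\equiv0$ back into \eqref{00005} forces $\kappa=0$ and reduces the system to the overdetermined-elliptic equation $\mathfrak{L}^*_g(f)=0$, for whose solutions unique continuation applies, so $f$ cannot vanish on a nonempty open set unless $f\equiv0$. In the cases of interest this is automatic: for a $CPE$ metric the potential $f$ is non-constant, while for a Miao--Tam or vacuum static metric the zero set of $f$ is exactly the boundary $\partial M$. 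Hence $\{f\neq0\}$ is dense, $\nabla R=0$ there, and by continuity of $\nabla R$ together with connectedness of $M^n$ we conclude that $R$ is constant.
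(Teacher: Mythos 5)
Your proposal is correct and takes essentially the same route as the paper: the paper's proof differentiates \eqref{00005} directly (following Miao--Tam's Theorem 7) and uses the Ricci identity together with \eqref{11003} to collapse everything to the same identity $f\,\nabla R=0$ that you derive from \eqref{000077}. Your closing discussion of the zero set of $f$ (forcing $\kappa=0$ on any open vanishing set and then invoking unique continuation) is a detail the paper leaves implicit rather than a different method, and it correctly addresses the only genuinely delicate point in passing from $f\,\nabla R=0$ to constant scalar curvature.
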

\begin{proof}
As the proof of Theorem 7 in Miao-Tam \cite{MT2009}, differenting \eqref{00005}, we get
\begin{eqnarray} 
-f_{,kkj}\delta_{ij}+f_{,ijj}-f_{,j}R_{ij}-fR_{ij,j}=0.
 \end{eqnarray}  
From Ricci identity and \eqref{11003}, we can derive that scalar curvature $R$ is constant.
Hence we complete the proof of Lemma \ref{0010000}.
\end{proof}

\begin{lem}\label{10000}
Let $(M^n,g)(n\geq3)$ be an $n$-dimensional Riemannian manifold, $f$ be a smooth solution to \eqref{00005}.     Then
\begin{eqnarray} 
fC_{kij}=  f_{,h}R_{hjik}+\frac{R}{n-1}(f_{,k}\delta_{ij}-f_{,i}\delta_{kj})-(   f_{,k}R_{ij} -f_{,i}R_{kj}        ).
 \end{eqnarray}  \end{lem}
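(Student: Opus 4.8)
The plan is to derive the identity by differentiating the trace-reduced equation \eqref{000077} and then commuting third covariant derivatives of $f$. First I would invoke Lemma \ref{0010000}, which guarantees that $R$ is constant; hence $R_{,i}=0$ and the Cotton tensor \eqref{11005} collapses to $C_{kij}=R_{ij,k}-R_{kj,i}$. This is the expression I aim to reconstruct, so the whole argument reduces to producing the combination $f(R_{ij,k}-R_{kj,i})$ out of derivatives of $f$ and curvature.

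To this end, rewrite \eqref{000077} as an expression for the Hessian,
$$f_{,ij}=fR_{ij}-\frac{fR}{n-1}\delta_{ij}-\frac{\kappa}{n-1}\delta_{ij},$$
and take one further covariant derivative. Differentiating in the direction $e_k$ and using that both $R$ and $\kappa$ are constant gives
$$f_{,ijk}=f_{,k}R_{ij}+fR_{ij,k}-\frac{R}{n-1}f_{,k}\delta_{ij}.$$
Writing the analogous expression with the pair $(i,k)$ interchanged (that is, differentiating $f_{,kj}$ in the direction $e_i$) and subtracting the two yields
$$f_{,ijk}-f_{,kji}=f\,(R_{ij,k}-R_{kj,i})+(f_{,k}R_{ij}-f_{,i}R_{kj})-\frac{R}{n-1}(f_{,k}\delta_{ij}-f_{,i}\delta_{kj}).$$
Solving this for $f(R_{ij,k}-R_{kj,i})=fC_{kij}$ already isolates every term on the right-hand side of the claimed identity except the curvature term, which must come from the left-hand commutator $f_{,ijk}-f_{,kji}$.

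The main point, and the step most prone to sign and index errors, is converting this commutator into a single Riemann term. Using the symmetry of the Hessian, $f_{,ij}=f_{,ji}$, one has $f_{,ijk}=f_{,jik}$ and $f_{,kji}=f_{,jki}$, so that $f_{,ijk}-f_{,kji}=f_{,jik}-f_{,jki}$. Applying the Ricci identity \eqref{11028} to the one-form $f_{,\cdot}$ (namely $f_{,jik}-f_{,jki}=f_{,m}R_{mjik}$) rewrites this as $f_{,h}R_{hjik}$ after renaming the dummy index. Substituting back produces exactly
$$fC_{kij}=f_{,h}R_{hjik}+\frac{R}{n-1}(f_{,k}\delta_{ij}-f_{,i}\delta_{kj})-(f_{,k}R_{ij}-f_{,i}R_{kj}),$$
which completes the argument. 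The only real care required is to align the free indices of the commutator with the Riemann tensor $R_{hjik}$ appearing in the statement, and to confirm that the constancy of $R$ genuinely annihilates the $R_{,\cdot}$ terms in the Cotton tensor so that $C_{kij}$ reduces to $R_{ij,k}-R_{kj,i}$.
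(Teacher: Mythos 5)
Your proposal is correct and follows essentially the same route as the paper: differentiate \eqref{000077} (using the constancy of $R$ from Lemma \ref{0010000}), antisymmetrize in $i$ and $k$ to isolate $fC_{kij}=f(R_{ij,k}-R_{kj,i})$, and convert the commutator $f_{,ijk}-f_{,kji}$ into $f_{,h}R_{hjik}$ via the Ricci identity. The only difference is presentational: you make explicit the Hessian-symmetry step $f_{,ijk}-f_{,kji}=f_{,jik}-f_{,jki}$ that the paper leaves implicit when invoking the Ricci identity.
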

 
 \begin{remark}
 As in \cite{BDR2015} and \cite{BR2017},  we can derive 
\begin{eqnarray}
fC_{kij}=T_{kij}+W_{kijs}f_{,s}\label{00007}
  \end{eqnarray}
from \eqref{00003} and Lemma \ref{10000},  where $T_{kij}$ is defined as
\begin{eqnarray*}
T_{kij}&=&\frac{n-1}{n-2}(R_{jk}f_{,i}-R_{ij}f_{,k})-\frac{R}{n-2}(\delta_{jk}f_{,i}-\delta_{ij}f_{,k})\\
&&+\frac{1}{n-2}(\delta_{jk} R_{is}f_{,s}-\delta_{ij}R_{ks}f_{,s}).
  \end{eqnarray*}
\end{remark}

\begin{proof}
Differenting \eqref{000077}, we get
\begin{eqnarray} 
fR_{ij,k}=f_{,ijk}+\frac{Rf_{,k}}{n-1}\delta_{ij}-f_{,k}R_{ij}.\label{00002}
 \end{eqnarray}  

Combining this equation with Ricci identity, we have
\begin{eqnarray*} 
fC_{kij}&=&f(R_{ij,k}-R_{kj,i})\\
&=&f_{,ijk}-f_{,kji} +\frac{R}{n-1}(f_{,k}\delta_{ij}-f_{,i}\delta_{kj})-(   f_{,k}R_{ij} -f_{,i}R_{kj}        )\\
&=&  f_{,h}R_{hjik}+\frac{R}{n-1}(f_{,k}\delta_{ij}-f_{,i}\delta_{kj})-(   f_{,k}R_{ij} -f_{,i}R_{kj}        ).
 \end{eqnarray*}  
Hence we complete the proof of Lemma \ref{10000}.
\end{proof}

\begin{lem}\label{guanhelem00002}
Let $(M^n,g)(n\geq3)$ be an $n$-dimensional Riemannian manifold, $f$ be a smooth solution to \eqref{00005}.     Then
\begin{eqnarray} 
fR_{ij}C_{kij,k}&=&fR_{ij}R_{jk}R_{ki}-fR_{ij}R_{hk}R_{ikjh}-\frac{1}{2}<\nabla f, \nabla |Ric|^2>\nonumber\\
&&-2R_{ij}f_{,k}C_{kij}+\frac{n\kappa}{n-1}|\mathring{Ric}|^2.
 \end{eqnarray}  
\end{lem}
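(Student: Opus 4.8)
The plan is to differentiate the identity of Lemma \ref{10000} and contract it with the Ricci tensor. Since $R$ is constant by Lemma \ref{0010000}, applying $\nabla_k$ to
\begin{equation*}
fC_{kij}=f_{,h}R_{hjik}+\frac{R}{n-1}(f_{,k}\delta_{ij}-f_{,i}\delta_{kj})-(f_{,k}R_{ij}-f_{,i}R_{kj})
\end{equation*}
and using the Leibniz rule gives $fC_{kij,k}=\nabla_k(fC_{kij})-f_{,k}C_{kij}$. Contracting with $R_{ij}$ reduces the task to expanding $R_{ij}\nabla_k(fC_{kij})$, because
\begin{equation*}
fR_{ij}C_{kij,k}=R_{ij}\nabla_k(fC_{kij})-R_{ij}f_{,k}C_{kij}.
\end{equation*}
The expansion of $R_{ij}\nabla_k(fC_{kij})$ produces two Hessian-times-curvature terms $R_{ij}f_{,hk}R_{hjik}$ and $R_{ij}f_{,ik}R_{kj}$, a divergence-of-Riemann term $R_{ij}f_{,h}R_{hjik,k}$, a term $R_{ij}f_{,i}R_{kj,k}$ that vanishes since $R_{kj,k}=\tfrac12 R_{,j}=0$ by \eqref{11003}, scalar terms carrying $\Delta f$ and $R_{ij}f_{,ij}$, and the piece $-R_{ij}f_{,k}R_{ij,k}=-\tfrac12<\nabla f,\nabla|Ric|^2>$, where I use $R_{ij}R_{ij,k}=\tfrac12\nabla_k|Ric|^2$.

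Next I would eliminate every second derivative of $f$ by means of the static equation in the form \eqref{000077}, i.e. $f_{,ab}=fR_{ab}-\frac{fR+\kappa}{n-1}\delta_{ab}$, and replace $\Delta f$ via \eqref{000066}. Substituting $f_{,ik}$ into $R_{ij}f_{,ik}R_{kj}$ produces the cubic term $fR_{ij}R_{jk}R_{ki}$, while substituting $f_{,hk}$ into $R_{ij}f_{,hk}R_{hjik}$ and tracing (using $R_{ij}R_{kjik}=-|Ric|^2$ and the symmetry $R_{hjik}=-R_{ikjh}$ of the curvature tensor) produces $-fR_{ij}R_{hk}R_{ikjh}$. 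I expect every $fR|Ric|^2$ contribution to cancel among these substitutions and the $\Delta f$-terms, while the surviving $\kappa|Ric|^2$ pieces together with a leftover $\kappa R^2$ piece combine, through $|\mathring{Ric}|^2=|Ric|^2-\frac{R^2}{n}$, into exactly $\frac{n\kappa}{n-1}|\mathring{Ric}|^2$.

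The delicate step, which I expect to be the main obstacle, is the divergence-of-Riemann term $R_{ij}f_{,h}R_{hjik,k}$ and its reinterpretation as a Cotton contraction. Using the contracted second Bianchi identity \eqref{11002} together with the curvature symmetries, I would establish $R_{hjik,k}=R_{ih,j}-R_{ij,h}$, so that this term equals $R_{ij}f_{,h}(R_{ih,j}-R_{ij,h})$. Here $R_{ij}f_{,h}R_{ij,h}=\tfrac12<\nabla f,\nabla|Ric|^2>$, and relabelling the symmetric pair of Ricci indices shows $R_{ij}f_{,h}R_{ih,j}=R_{ij}f_{,k}R_{kj,i}$. Since $R$ is constant, \eqref{11005} gives $C_{kij}=R_{ij,k}-R_{kj,i}$, whence $R_{ij}f_{,k}C_{kij}=\tfrac12<\nabla f,\nabla|Ric|^2>-R_{ij}f_{,k}R_{kj,i}$; comparing the two shows $R_{ij}f_{,h}R_{hjik,k}=-R_{ij}f_{,k}C_{kij}$. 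Adding this to the explicit $-R_{ij}f_{,k}C_{kij}$ coming from the Leibniz rule accounts for the factor $-2$ in the stated formula. Keeping precise track of the curvature symmetries and the sign in $R_{hjik,k}$ is where the argument is most error-prone, since an incorrect sign there would collapse the Cotton coefficient from $-2$ to $0$.
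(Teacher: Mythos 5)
Your proposal is correct and follows essentially the same route as the paper's proof: expand $fR_{ij}C_{kij,k}=R_{ij}(fC_{kij})_{,k}-R_{ij}f_{,k}C_{kij}$ via Lemma \ref{10000}, eliminate the Hessians through \eqref{000066} and \eqref{000077}, and convert $R_{ij}f_{,h}R_{hjik,k}$ into $-R_{ij}f_{,k}C_{kij}$ via the second Bianchi identity (the paper states this as $R_{hjik,k}=C_{jhi}$, which is exactly your identity $R_{hjik,k}=R_{ih,j}-R_{ij,h}$ with $R$ constant). Your bookkeeping of the cancellations, including the residual $-\frac{R^2\kappa}{n-1}$ absorbing into $\frac{n\kappa}{n-1}|\mathring{Ric}|^2$, matches the paper's computation.
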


\begin{proof}

According to Lemma \ref{10000}, we have 
\begin{eqnarray*} 
fR_{ij}C_{kij,k}&=&R_{ij}(fC_{kij})_{,k}-R_{ij}f_{,k}C_{kij}\\
&=&R_{ij}\left[                  f_{,h}R_{hjik}+\frac{R}{n-1}(f_{,k}\delta_{ij}-f_{,i}\delta_{kj})-(   f_{,k}R_{ij} -f_{,i}R_{kj}        )              \right]_{,k}\\
&&-R_{ij}f_{,k}C_{kij}\\
&=&R_{ij}                  f_{,hk}R_{hjik}+R_{ij}    f_{,h}R_{hjik,k}+            \frac{R^2}{n-1}\Delta f    -    \frac{R}{n-1} f_{,ij}R_{ij} \\
&& -            \Delta f|Ric|^2 -R_{ij}f_{,k}R_{ij,k} +R_{ij} f_{,ik}R_{kj}                 -R_{ij}f_{,k}C_{kij}.
 \end{eqnarray*}  

From \eqref{11002}, \eqref{11005} and the fact  $R$ is a constant, we know that $ R_{hjik,k}=C_{jhi}$. Substituting this equation,  \eqref{000066} and  \eqref{000077} into  equation above, we have
\begin{eqnarray*} 
fR_{ij}C_{kij,k}&=&R_{ij}            (    fR_{hk}-\frac{fR}{n-1}\delta_{hk}-\frac{\kappa}{n-1}\delta_{hk}   )R_{hjik}+R_{ij}    f_{,h}  C_{jhi}  \\
&&   +            \frac{R^2}{n-1}\frac{fR+n\kappa}{1-n}   -    \frac{R}{n-1} (   fR_{ij}-\frac{fR}{n-1}\delta_{ij}-\frac{\kappa}{n-1}\delta_{ij}      )R_{ij} \\
&& -           \frac{fR+n\kappa}{1-n}|Ric|^2 -R_{ij}f_{,k}R_{ij,k} +R_{ij} (    fR_{ik}-\frac{fR}{n-1}\delta_{ik}\\
&&-\frac{\kappa}{n-1}\delta_{ik}         )R_{kj}                 -R_{ij}f_{,k}C_{kij}.
 \end{eqnarray*}  
That is 
\begin{eqnarray*} 
fR_{ij}C_{kij,k}&=&fR_{ij}R_{jk}R_{ki}-fR_{ij}R_{hk}R_{ikjh}-\frac{1}{2}<\nabla f, \nabla |Ric|^2>\\
&&-2R_{ij}f_{,k}C_{kij}+\frac{n\kappa}{n-1}|Ric|^2-\frac{R^2\kappa}{n-1}.
 \end{eqnarray*}

Thus we complete the proof of Lemma \ref{guanhelem00002}.
\end{proof}


\begin{lem}\label{guanhelem00003}
Let $(M^3,g)$ be a three-dimensional  Riemannian manifold with non-negative Ricci curvature. Then
\begin{eqnarray*}
{6{R_{ij}}{R_{hj}}{R_{hi}} - 5R{{\left| {Ric} \right|}^2} + {R^3}}\geq 0.
\end{eqnarray*}

\end{lem}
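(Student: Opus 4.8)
The plan is to reduce this tensorial inequality to a purely algebraic inequality in the eigenvalues of the Ricci tensor, and then to recognize the latter as an instance of Schur's inequality. Since $Ric$ is a symmetric $(0,2)$-tensor, at any fixed point $p\in M^3$ I would choose the local orthonormal frame $e_1,e_2,e_3$ so that it is diagonalized, with eigenvalues $\lambda_1,\lambda_2,\lambda_3$. The hypothesis of non-negative Ricci curvature says precisely that $\lambda_i\geq 0$ for each $i$. In this frame the three scalar quantities occurring in the statement become power sums of the eigenvalues: $R=\lambda_1+\lambda_2+\lambda_3$, $\left|Ric\right|^2=\lambda_1^2+\lambda_2^2+\lambda_3^2$, and $R_{ij}R_{hj}R_{hi}=\tr(Ric^3)=\lambda_1^3+\lambda_2^3+\lambda_3^3$, the last equality because $R_{ij}R_{hj}=(Ric^2)_{ih}$ by symmetry, so contracting once more with $R_{hi}$ gives $\tr(Ric^3)$.

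Next I would substitute these into the left-hand side. Writing $p_k=\sum_i\lambda_i^k$, the expression $6R_{ij}R_{hj}R_{hi}-5R\left|Ric\right|^2+R^3$ becomes $6p_3-5p_1p_2+p_1^3$. A direct expansion in the $\lambda_i$ then collapses this combination to
\begin{eqnarray*}
6p_3-5p_1p_2+p_1^3
&=&2\Big[\lambda_1(\lambda_1-\lambda_2)(\lambda_1-\lambda_3)+\lambda_2(\lambda_2-\lambda_1)(\lambda_2-\lambda_3)\\
&&\quad+\lambda_3(\lambda_3-\lambda_1)(\lambda_3-\lambda_2)\Big].
\end{eqnarray*}
The crux of the argument is verifying this algebraic identity; the only real care needed is in tracking the coefficients of the monomials $\lambda_i^3$, $\lambda_i^2\lambda_j$ and $\lambda_1\lambda_2\lambda_3$ through the expansion (the net effect is that the bracket equals the Schur symmetric expression).

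Finally, the bracketed term is exactly the classical Schur inequality for exponent $t=1$, which holds for all non-negative reals. To make the proof self-contained I would establish it by ordering the eigenvalues, say $\lambda_1\geq\lambda_2\geq\lambda_3\geq 0$: the term $\lambda_3(\lambda_3-\lambda_1)(\lambda_3-\lambda_2)$ is the product of a non-negative factor and two non-positive factors, hence non-negative, while the remaining two terms combine as $(\lambda_1-\lambda_2)\big[\lambda_1(\lambda_1-\lambda_3)-\lambda_2(\lambda_2-\lambda_3)\big]\geq 0$ since both bracketed products are ordered. This yields $6R_{ij}R_{hj}R_{hi}-5R\left|Ric\right|^2+R^3\geq 0$, with equality precisely when all $\lambda_i$ coincide or when two coincide and the third vanishes — i.e. the round sphere and the product cylinder, the rigidity cases that reappear in the classification later. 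I do not expect a genuine obstacle here; the subtle point is only conceptual, namely that non-negativity of $Ric$ is essential, so the correct tool is Schur's inequality rather than a sum-of-squares decomposition (the inequality is false for indefinite $Ric$).
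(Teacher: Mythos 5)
Your proposal is correct and follows essentially the same route as the paper: diagonalize $Ric$ at a point, reduce the expression to $6p_3-5p_1p_2+p_1^3=2\sum_i\lambda_i(\lambda_i-\lambda_j)(\lambda_i-\lambda_k)$ (the Schur expression), and conclude by ordering the non-negative eigenvalues, isolating the smallest eigenvalue's term and pairing the two largest. The paper writes the paired terms as $(\rho_2-\rho_3)^2(\rho_2+\rho_3-\rho_1)$, which is just the factored form of your bracket $(\lambda_1-\lambda_2)\bigl[\lambda_1(\lambda_1-\lambda_3)-\lambda_2(\lambda_2-\lambda_3)\bigr]$, so the two arguments coincide.
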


\begin{proof}

For any fixed point $p\in M$, we choose an orthonormal basis $\{e_i\}_{i=1}^3$ in $T_p M$ such that
$$R_{ij}=\rho_i\delta_{ij},\ \ \ R=\rho_1+\rho_2+\rho_3.$$
Without loss of generality, we can assume $0\leq \rho_1\leq \rho_2 \leq \rho_3$. 

Similar to the proof of Lemma 2.2 in \cite{HL2008}, we can compute that
\begin{eqnarray*}
{6{R_{ij}}{R_{hj}}{R_{hi}} - 5R{{\left| {Ric} \right|}^2} + {R^3}}&=&6\sum_{i=1}^3\rho_i^3 - 5\sum_{i=1}^3\rho_i\sum_{j=1}^3\rho_j^2 +(\sum_{i=1}^3\rho_i)^3   \\
&=&9\sum_{i=1}^3\rho_i^3 - 6\sum_{i=1}^3\rho_i\sum_{j=1}^3\rho_j^2 +(\sum_{i=1}^3\rho_i)^3\\
&&-\sum_{i=1}^3\left(3\rho_i^3 - \rho_i\sum_{j=1}^3\rho_j^2 \right)\\
&=&\sum_{i,j,k=1}^3\rho_i(\rho_i-\rho_j)(\rho_i-\rho_k)-\sum_{i,j=1}^3\rho_i(\rho_i-\rho_j)^2\\
&=&\sum_i\sum_{j\neq k}\rho_i(\rho_i-\rho_j)(\rho_i-\rho_k)\\
&=&\sum_{i,j,k\neq}\rho_i(\rho_i-\rho_j)(\rho_i-\rho_k)\\ 
&=&2[\rho_1(\rho_1-\rho_2)(\rho_1-\rho_3)+\rho_2(\rho_2-\rho_1)(\rho_2-\rho_3)\\
&&+\rho_3(\rho_3-\rho_1)(\rho_3-\rho_2)]\\ 
&=&2[\rho_1(\rho_1-\rho_2)(\rho_1-\rho_3)\\
&&+ (\rho_2-\rho_3)^2(\rho_2+\rho_3-\rho_1)]\geq 0.
\end{eqnarray*}
Thus we complete the proof of Lemma \ref{guanhelem00003}.
\end{proof}

\section{Proof of main theorems}

In this section, we prove the main theorems of our paper. Let $(M^n,g)(n\geq 3)$ be an $n$-dimensional compact, oriented, connected Riemannian manifold, $f$ be a smooth solution to \eqref{00005}. From a direct computation and the definition of $C_{ijk},$ we get
\begin{eqnarray*}
 div\left( {f^2\nabla {{\left| {Ric} \right|}^2}} \right) 
   &=&2f<\nabla f, \nabla |Ric|^2>+f^2\Delta |Ric|^2\\
 &=&2f<\nabla f, \nabla |Ric|^2>+2f^2|\nabla Ric|^2 +2f^2R_{ij}R_{ij,kk}  \\
 &=&2f<\nabla f, \nabla |Ric|^2>+2f^2|\nabla Ric|^2 +2f^2R_{ij}(C_{kij}+R_{jk,i})_{,k}  \\
   &=&2f<\nabla f, \nabla |Ric|^2>+2f^2|\nabla Ric|^2 +2f^2R_{ij}C_{kij,k}\\
   && +2f^2R_{ij}R_{jk,ik}.   
     \end{eqnarray*}
     Combining  equation above with Lemma \ref{guanhelem00002} and Ricci identity, we have   
 \begin{eqnarray}\label{0000000031}
 div\left( {f^2\nabla {{\left| {Ric} \right|}^2}} \right) 
    &=&4f^2R_{ij}R_{jk}R_{ki}-4f^2R_{ij}R_{hk}R_{ikjh}-8fR_{ij}f_{,k}C_{kij}\nonumber\\
    &&+\frac{4n\kappa f}{n-1}|\mathring{Ric}|^2-4f^2R_{ij}C_{kij,k}+2f^2|\nabla Ric|^2 \nonumber\\
    &&+2f^2R_{ij}C_{kij,k} +2f^2R_{ij}R_{jk,ik}\nonumber\\
      &=&6f^2R_{ij}R_{jk}R_{ki}-6f^2R_{ij}R_{hk}R_{ikjh}-8fR_{ij}f_{,k}C_{kij}\nonumber\\
      &&+\frac{4n\kappa f}{n-1}|\mathring{Ric}|^2-2f^2R_{ij}C_{kij,k}+2f^2|\nabla Ric|^2. 
   \end{eqnarray}

Since 
\begin{eqnarray*}
(f^2R_{ij}C_{kij})_{,k}=2ff_{,k}R_{ij}C_{kij}+f^2R_{ij,k}C_{kij}+f^2R_{ij}C_{kij,k},
\end{eqnarray*}
it follows that
  \begin{eqnarray*}
 div\left( {f^2\nabla {{\left| {Ric} \right|}^2}} \right) 
     &=&6f^2R_{ij}R_{jk}R_{ki}-6f^2R_{ij}R_{hk}R_{ikjh}-4fR_{ij}f_{,k}C_{kij}\\
     &&+\frac{4n\kappa f}{n-1}|\mathring{Ric}|^2+2f^2R_{ij,k}C_{kij}- 2(f^2R_{ij}C_{kij})_{,k}                                      \\
     &&            +2f^2|\nabla Ric|^2\\
      &=&6f^2R_{ij}R_{jk}R_{ki}-6f^2R_{ij}R_{hk}R_{ikjh}-4fR_{ij}f_{,k}C_{kij}\\
      &&+\frac{4n\kappa f}{n-1}|\mathring{Ric}|^2+f^2|C|^2- 2(f^2R_{ij}C_{kij})_{,k}                                                  +2f^2|\nabla Ric|^2. 
     \end{eqnarray*}

According to \eqref{00007} and the fact that $W_{ijkl}=0$ when $n=3,$ we have
\begin{eqnarray*}
fC_{kij}=T_{kij},
\end{eqnarray*}
then from the definition of $T_{kij},$ we can get
\begin{eqnarray}\label{000000031}
f|C|^2=T_{kij}C_{kij}=-4C_{kij}R_{ij}f_{,k}.
\end{eqnarray}
Besides, from \eqref{00003} we know that
 \begin{eqnarray}\label{000000032}
 R_{ij}R_{jk}R_{ki}-R_{ijkl}R_{ik}R_{jl}=3R_{ij}R_{jk}R_{ki}-\frac{5R}{2}|Ric|^2+\frac{R^3}{2}.
 \end{eqnarray}

Thus we can conclude that when $n=3$, 
  \begin{eqnarray}\label{000031}
 div\left( {f^2\nabla {{\left| {Ric} \right|}^2}} \right) 
          &=&    3f^2(          6R_{ij}R_{jk}R_{ki}-5R|Ric|^2+R^3)      +6\kappa f|\mathring{Ric}|^2+2f^2|C|^2\nonumber\\
          &&- 2(f^2R_{ij}C_{kij})_{,k}                                                  +2f^2|\nabla Ric|^2. 
     \end{eqnarray}

To prove Theorem \ref{thmcpe}, we need the following results by Hwang \cite{H2000} and Cheng \cite{C2001}.
\begin{prop}[\cite{H2000}]\label{prop}
Let $(M^n,g,f)$ be a $CPE$ metric with $f$ non-constant. Then the set $\{ x\in M^n:f(x)=-1       \}$ has measure zero.

\end{prop}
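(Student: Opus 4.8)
The plan is to exploit the special role played by the factor $1+f$ in the critical point equation \eqref{guanhe10002}. Since a $CPE$ metric has constant scalar curvature, tracing $\mathfrak{L}^*_g(f)=Ric_g-\frac{R}{n}g$ yields $\Delta f=-\frac{fR}{n-1}$; substituting this back into \eqref{guanhe10002} and writing $\mathring{Ric}=Ric_g-\frac{R}{n}g$ for the traceless Ricci tensor, one obtains the pointwise identity
\begin{equation*}
Hess\, f=(1+f)\mathring{Ric}-\frac{fR}{n(n-1)}g .
\end{equation*}
The point of this reformulation is that it expresses the full Hessian of $f$ through the zeroth-order quantity $f$ together with the curvature, and that the coefficient $(1+f)$ of $\mathring{Ric}$ vanishes precisely on the set we must control.

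Next I would set $\Sigma=\{x\in M^n:f(x)=-1\}$ and reduce to its critical part. Because $f$ is smooth, its gradient vanishes almost everywhere on any fixed level set, so $\nabla f=0$ a.e.\ on $\Sigma$; it therefore suffices to estimate the measure of $\Sigma_0:=\Sigma\cap\{\nabla f=0\}$, since $\Sigma\setminus\Sigma_0$ is already negligible. At any point of $\Sigma_0$ we have $1+f=0$ and $f=-1$, so the displayed identity collapses to $Hess\, f=\frac{R}{n(n-1)}g$.

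If $R\neq 0$ this Hessian is definite, so every point of $\Sigma_0$ is a nondegenerate critical point at which $f$ attains a strict local extremum; by the Morse lemma it is an isolated zero of $f+1$, whence $\Sigma_0$ is discrete and $|\Sigma_0|=0$, giving $|\Sigma|=0$. If instead $R=0$, the relation $\Delta f=-\frac{fR}{n-1}$ shows that $f+1$ is a nonconstant harmonic function, and the nodal set of such a function has measure zero by the standard theory of nodal sets of solutions to second-order elliptic equations. In either case $|\Sigma|=0$, which is the assertion.

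The main obstacle is the degenerate case $R=0$: there the Hessian computed above vanishes on $\Sigma_0$, so the Morse-lemma argument breaks down and one is forced to invoke the measure-zero property of nodal sets, which is ultimately a unique continuation phenomenon rather than an elementary computation. A uniform alternative that sidesteps the case distinction entirely is to first establish that $(M^n,g)$ and $f$ are real-analytic — this follows from DeTurck--Kazdan type regularity, since in harmonic coordinates the constant-scalar-curvature $CPE$ system becomes an elliptic system with analytic nonlinearity — and then use that the zero set of the nonconstant real-analytic function $f+1$ automatically has measure zero.
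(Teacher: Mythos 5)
Your argument is correct, but note that the paper never proves this proposition at all: it is imported verbatim from Hwang \cite{H2000} and used as a black box, solely to conclude that the vanishing of $\int_{M^3}(1+\tilde{f})^2\bigl(|C|^2+|\nabla Ric|^2\bigr)dM_g$ forces $C\equiv 0$ and $\nabla Ric\equiv 0$ off a null set. So the comparison is with Hwang's original lemma, not with anything written here. Your route is sound: the identity $\mathrm{Hess}\,f=(1+f)\mathring{Ric}-\frac{fR}{n(n-1)}g$ follows correctly from tracing \eqref{guanhe10002} (which gives $\Delta f=-\frac{R}{n-1}f$) and substituting back; the regular part of $\Sigma=\{f=-1\}$ is locally an embedded hypersurface and hence null; and on the critical part $\Sigma_0$ the Hessian collapses to $\frac{R}{n(n-1)}g$, so for $R\neq 0$ each such point is a strict local extremum with value $-1$ and is therefore isolated in $\Sigma$, making $\Sigma_0$ discrete. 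The one inefficiency is your ``degenerate case'': $R=0$ is in fact vacuous, so neither nodal-set theory nor the unique-continuation/analyticity machinery is needed. A $CPE$ metric lives on a \emph{closed} manifold, and if $R=0$ then $\Delta f=0$ forces $f$ to be constant by the maximum principle, contradicting the hypothesis; indeed, multiplying $\Delta f=-\frac{R}{n-1}f$ by $f$ and integrating shows $\frac{R}{n-1}$ is a positive eigenvalue of $-\Delta$, so $R>0$ always (the paper rederives exactly this positivity inside its proof of Theorem \ref{thmcpe}). With that observation your first case covers everything and the whole proof stays at the level of the implicit function theorem plus second-order Taylor expansion, which is what a self-contained replacement for the citation to \cite{H2000} should look like.
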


\begin{prop}[\cite{C2001} ]\label{prop2}
Let $M^{n}$ be a compact connected oriented locally conformally flat $n$-dimensional Riemannian manifold with constant scalar curvature. If the Ricci curvature of $M^{n}$ is non-negative, then $M^{n}$ is isometric to a space form or a Riemannian product $\mathbb{S}^{n-1}(c) \times \mathbb{S}^{1}$, where $c$ is the sectional curvature of $\mathbb{S}^{n-1}$.

\end{prop}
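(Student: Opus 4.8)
The plan is to exploit the fact that local conformal flatness together with constant scalar curvature makes the Ricci tensor a Codazzi tensor, and then to run a Bochner argument on the traceless Ricci tensor. First I would recall that local conformal flatness forces the Weyl tensor $W_{ijkl}$ to vanish and, via the second Bianchi identity (or by definition when $n=3$), the Cotton tensor to vanish as well. Since $R$ is constant, \eqref{11005} collapses to $C_{ijk}=R_{jk,i}-R_{ik,j}$, so $C=0$ gives $R_{jk,i}=R_{ik,j}$; that is, $Ric$ is a Codazzi tensor. Consequently the traceless Ricci tensor $\mathring{R}_{ij}=R_{ij}-\frac{R}{n}\delta_{ij}$ is also Codazzi and, by \eqref{11003}, divergence-free.

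Next I would derive a Bochner-type identity for $\mathring{R}$. Using the Codazzi property to write $\mathring R_{ij,kk}=\mathring R_{ik,jk}$, commuting the last two derivatives with the Ricci identity \eqref{11028}, and using $\mathring R_{ik,k}=0$, one obtains $\Delta\mathring R_{ij}=\mathring R_{mk}R_{mijk}+\mathring R_{im}R_{mj}$. Substituting the locally conformally flat curvature \eqref{00003} (with $W=0$) and contracting against $\mathring R_{ij}$ yields, after a routine computation in a Ricci eigenbasis,
\[\frac{1}{2}\Delta|\mathring R|^{2}=|\nabla\mathring R|^{2}+\frac{n}{n-2}\,\tr(\mathring R^{3})+\frac{R}{n-1}|\mathring R|^{2}.\]
Integrating over the closed manifold $M^{n}$ annihilates the left-hand side, so $\int_{M}|\nabla\mathring R|^{2}=-\int_{M}\big(\tfrac{n}{n-2}\tr(\mathring R^{3})+\tfrac{R}{n-1}|\mathring R|^{2}\big)$.

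The crux is then a pointwise algebraic inequality: written in the Ricci eigenvalues $\rho_{1},\dots,\rho_{n}$, the integrand $\frac{n}{n-2}\tr(\mathring R^{3})+\frac{R}{n-1}|\mathring R|^{2}$ is a cubic form that is non-negative whenever $Ric\geq 0$. In the case $n=3$ this is precisely Lemma \ref{guanhelem00003} (indeed $6R_{ij}R_{hj}R_{hi}-5R|Ric|^{2}+R^{3}$ equals twice this integrand), and for general $n$ I would establish the analogous inequality, checking that equality forces the eigenvalues into one of exactly two configurations: all equal, or one of them zero and the remaining $n-1$ equal. The sign of this term forces $\nabla\mathring R=0$ and makes the cubic vanish pointwise. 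Since $R$ is constant, $\nabla Ric=0$; combined with $W=0$ this gives $\nabla R_{ijkl}=0$, so $M^{n}$ is locally symmetric with constant Ricci eigenvalues.

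Finally I would read off the geometry from the de Rham decomposition. The parallel eigendistributions of $Ric$ split the universal cover isometrically, and the locally conformally flat condition admits only a space form or a product of a one-dimensional flat factor with a factor of constant curvature (a product of two space forms of dimension $\geq 2$ is conformally flat only in the $\mathbb{S}^{p}\times\mathbb{H}^{q}$ case, which $Ric\geq0$ excludes). Compactness promotes the flat line to a circle, yielding $\mathbb{S}^{n-1}(c)\times\mathbb{S}^{1}$; in the remaining case $M^{n}$ is a space form. I expect the main obstacle to be the general-dimension cubic inequality together with the precise identification of its equality cases — the analytic heart of the argument, for which Lemma \ref{guanhelem00003} already supplies the $n=3$ instance.
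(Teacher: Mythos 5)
First, a point of order: the paper itself contains no proof of this proposition --- it is imported verbatim from Cheng \cite{C2001} and used as a black box --- so there is no internal argument to compare yours against; what follows measures your proposal against the standard (essentially Cheng's) proof, which is also the route you chose. Your outline is the classical one and its analytic core is sound. With $R$ constant and the Cotton tensor vanishing, $Ric$ is indeed a divergence-free Codazzi tensor, the identity $\Delta\mathring R_{ij}=\mathring R_{mk}R_{mijk}+\mathring R_{im}R_{mj}$ is correct, and substituting \eqref{00003} with $W=0$ and contracting does give exactly $\tfrac12\Delta|\mathring R|^{2}=|\nabla\mathring R|^{2}+\frac{n}{n-2}\tr(\mathring R^{3})+\frac{R}{n-1}|\mathring R|^{2}$; your claim that the paper's cubic in Lemma \ref{guanhelem00003} is twice this curvature term when $n=3$ also checks out. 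Moreover, the gap you flag (the general-$n$ positivity) closes cleanly, and you should write it out: in a Ricci eigenbasis the curvature term equals $\frac{1}{2(n-2)}\sum_{i\neq j}\bigl(\rho_i+\rho_j-\frac{R}{n-1}\bigr)(\rho_i-\rho_j)^{2}$, and since $\rho_i+\rho_j-\frac{R}{n-1}=\frac{1}{n-1}\sum_{k\neq i,j}(\rho_i+\rho_j-\rho_k)$, grouping the triple sum by unordered triples reduces everything to the three-variable Schur inequality $x(x-y)(x-z)+y(y-x)(y-z)+z(z-x)(z-y)\geq 0$ for non-negative reals --- precisely the computation in Lemma \ref{guanhelem00003} --- whose equality cases (all three equal, or two equal and the third zero) yield exactly the two eigenvalue configurations you predict.

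The genuine gap is your last step, ``compactness promotes the flat line to a circle.'' Once $\nabla Ric=0$ with spectrum $(0,c,\dots,c)$, $c>0$, the universal cover does split isometrically as $\mathbb{R}\times\mathbb{S}^{n-1}(c)$, but a compact oriented quotient of this cylinder need not be a Riemannian product. Since $\mathrm{Isom}(\mathbb{R}\times\mathbb{S}^{n-1}(c))=\mathrm{Isom}(\mathbb{R})\times O(n)$ preserves the de Rham splitting, two quotients are isometric if and only if their deck groups are conjugate in this product; hence the mapping torus of $(t,x)\mapsto(t+a,Bx)$ with $B\in SO(n)$, $B\neq\mathrm{id}$, is compact, orientable, locally conformally flat, with constant scalar curvature and $Ric\geq 0$, yet is not isometric to $\mathbb{S}^{n-1}(c)\times\mathbb{S}^{1}$. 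So compactness plus orientability alone cannot deliver the conclusion in the literal form you (and the quoted statement) assert, and your final paragraph as written does not establish it. You must either carry out the finer analysis of which quotients can occur --- this is where the real remaining content lies --- or settle for the conclusion ``a space form, or an isometric quotient of $\mathbb{R}\times\mathbb{S}^{n-1}(c)$.'' Note that this weaker form would still suffice for the way Proposition \ref{prop2} is used in the proof of Theorem \ref{thmcpe}: the fiberwise integration of $\Delta_{\mathbb{S}^{2}}\tilde f$ there excludes every such quotient, not just the untwisted product.
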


\begin{proof}[Proof of Theorem \ref{thmcpe}]

Let $(M^3, g, \tilde{f})$ be a three-dimensional compact, oriented, connected $CPE$ metric with non-negative Ricci curvature. Then $\tilde{f}$ is a non-constant solution to \eqref{00000000001}.
Hence  $f=1+\tilde{f}$ satisfies \eqref{00005} with $\kappa=-\frac{R}{3}$. From Lemma \ref{guanhelem00003} and \eqref{000031} we have 
  \begin{eqnarray*}
 div\left( {(1+\tilde{f})^2\nabla {{\left| {Ric} \right|}^2}} \right) 
        &\geq& -2R(1+\tilde{f})|\mathring{Ric}|^2+2(1+\tilde{f})^2|C|^2\\
     &&   - 2((1+\tilde{f})^2R_{ij}C_{kij})_{,k}              +2(1+\tilde{f})^2|\nabla Ric|^2. 
     \end{eqnarray*}

According to a direct computation on \eqref{00000000001}, we can see that $(1+\tilde{f})|\mathring{Ric}|^2=\tilde{f}_{,ij}\mathring{R}_{ij}$, and  from \eqref{11003}  and the fact that $M^3$ has no boundary it is easy to know that   $\int_{M^3}(1+\tilde{f})|\mathring{Ric}|^2 dM_g=0.$  Integraling the  inequality above, we can derive that
  \begin{eqnarray*}
 0     \geq \int_{M^3}2(1+\tilde{f})^2|C|^2  dM_g           +2\int_{M^3}(1+\tilde{f})^2|\nabla Ric|^2 dM_g. 
     \end{eqnarray*}
From Proposition \ref{prop}, we can see that $M^3$ is a locally conformally flat manifold and $\nabla Ric=0$.

Taking trace of \eqref{00000000001}, we have 
  \begin{eqnarray}\label{deltaf}
\Delta \tilde{f}=-\frac{R}{2}\tilde{f}.
     \end{eqnarray}
Multiply both sides of this equation by $\tilde{f}$ and integrate on $M^3$, then we can get
  \begin{eqnarray*}
-\int_{M^3}|\nabla \tilde{f}|^2 dM_g=\int_{M^3}(\Delta \tilde{f})\cdot \tilde{f} dM_g=-\frac{R}{2}\int_{M^3}\tilde{f}^2 dM_g.
     \end{eqnarray*}
According to the fact that $\tilde{f}$ is not a constant, we can conclude that the scalar curvature $R$ is a positive constant. 

By using Proposition \ref{prop2}, we can conclude that $M^3$ is isometric to a standard sphere $\mathbb{S}^3$ or a Riemannian  
product $\mathbb{S}^{2}(c) \times \mathbb{S}^1$, where $c$ is the sectional curvature of $\mathbb{S}^{2}$.

Here if $M^3$ is isometric to $\mathbb{S}^{2}(c) \times \mathbb{S}^1$, we have 
\[(R_{ij})_{3\times 3}=\left( {\begin{array}{*{20}{c}}
  { c   }&{}&{} \\
  {}&{c}&{} \\
  {}&{}&{0}
\end{array}} \right)_{3\times 3,}\]
and $R=2c$.
Substituting \eqref{deltaf} into \eqref{00000000001}, we can deduce that
  \begin{eqnarray*}
\tilde{f}(c\delta_{ij}-R_{ij})+\tilde{f}_{,ij}=R_{ij}-\frac{2c}{3}\delta_{ij}.
     \end{eqnarray*}
Then $\tilde{f}_{,11}=\frac{c}{3}$, $\tilde{f}_{,22}=\frac{c}{3}$, $\tilde{f}_{,33}=-c\tilde{f}-\frac{2c}{3}$. Thus for any $q\in \mathbb{S}^1$, we can get that $\int_{\mathbb{S}^2\times  \{q\}} \Delta_{\mathbb{S}^2\times  \{q\}} \tilde{f} d\text{vol} =\int_{\mathbb{S}^2\times  \{q\}}\frac{2c}{3} d\text{vol}
\neq0$, 
which is a controdiction.

Thus we finish the proof of Theorem \ref{thmcpe}.
\end{proof}

\begin{remark}
In the process  of proving Theorem \ref{thmcpe}, after getting the result that $M^3$ is a locally conformally flat manifold, we can also deduce that $M^3$  is
isometric to a standard sphere $\mathbb{S}^3$
 by using
Corollary 1.3 in \cite{CHY2014}.
\end{remark}

Similarly,  we can prove a three-dimensional V-static metric is locally conformally flat under some suitable conditions.
\begin{proof}[Proof of Theorem \ref{vstatic}]
Let $(M^3, g, f)$ be a three-dimensional compact, oriented, connected V-static metric with smooth boundary $\partial M$ and non-negative Ricci curvature.
 Integraling \eqref{000031}, we can deduce 
 \begin{eqnarray}\label{1010101010101}
 0     \geq             6\kappa \int_{M^3}f|\mathring{Ric}|^2     dM_g    +              2\int_{M^3}f^2|C|^2       dM_g      +2\int_{M^3}f^2|\nabla Ric|^2 dM_g
     \end{eqnarray}
 from Lemma \ref{guanhelem00003}.    Then if $\kappa\geq 0$, $f>0$ or $\kappa\leq 0$, $f<0$,	 $M^3$ is locally conformally flat.
Thus we complete the proof of Theorem \ref{vstatic}.
\end{proof}

When $\kappa=0$, Corollary \ref{thmstatic} can be directly deduced from Theorem \ref{vstatic} and Theorem A.

When $\kappa=1$, from \eqref{0000000031}, we can prove Theorem \ref{thmmiao}  on the basis of the following calculation
\begin{eqnarray*} 
div(f\nabla|Ric|^2)&=&  \frac{1}{f} div(f^2\nabla|Ric|^2)  -<\nabla f, \nabla|Ric|^2> \\
&=& 6fR_{ij}R_{jk}R_{ki}-6fR_{ij}R_{hk}R_{ikjh}-8R_{ij}f_{,k}C_{kij}\nonumber\\
      &&+\frac{4n\kappa }{n-1}|\mathring{Ric}|^2-2fR_{ij}C_{kij,k}+2f|\nabla Ric|^2 -<\nabla f, \nabla|Ric|^2>. 
 \end{eqnarray*}

Using  Lemma \ref{guanhelem00002} again, we have
\begin{eqnarray}\label{00000000000000}
 div\left( {f\nabla {{\left| {Ric} \right|}^2}} \right) 
     &=& 4 fR_{ij}R_{jk}R_{ki}-4fR_{ij}R_{hk}R_{ikjh}\nonumber\\
   &&-4R_{ij}f_{,k}C_{kij}+\frac{2n\kappa}{n-1}|\mathring{Ric}|^2+2f|\nabla Ric|^2.
\end{eqnarray}

\begin{proof}[Proof of Theorem \ref{thmmiao}]
Let $(M^3, g, f)$ be a three-dimensional compact, oriented, connected Miao-Tam critical metric with smooth boundary $\partial M$, non-negative Ricci curvature and non-negative $f$.
Then $f$ is a non-constant solution to \eqref{guanhe10001}.
That is $f$ satisfies \eqref{00005} with $\kappa=1$.
 
\begin{eqnarray*}
 div\left( {f\nabla {{\left| {Ric} \right|}^2}} \right) 
     &=& 4 fR_{ij}R_{jk}R_{ki}-4fR_{ij}R_{hk}R_{ikjh}\nonumber\\
   &&-4R_{ij}f_{,k}C_{kij}+3|\mathring{Ric}|^2+2f|\nabla Ric|^2.
\end{eqnarray*}

As  the proof of Theorem \ref{thmcpe}, integraling this equation, we can derive that
\begin{eqnarray*}
0\geq
  \int_{M^3}f(2|\nabla Ric|^2+ |C|^2) dM_g+\int_{M^3}3|\mathring{Ric}|^2 dM_g
\end{eqnarray*}
from \eqref{000000031}, \eqref{000000032} and  Lemma \ref{guanhelem00003}.
 Hence $M^3$ is an Einstein  manifold. Thus from Theorem B we can complete the proof of Theorem \ref{thmmiao}.
\end{proof}

\begin{remark}
 From the proof of Theorem    \ref{thmmiao}    we can deduce that,  when $\kappa>0$, a V-static metric is Einstein under the condition  of  $Ric\geq 0$ and $f\geq 0$.

\end{remark}

%
%
%
%
%
%
%
%
%
%
%
%
%
%
%
%
%
%



\begin{thebibliography}{99}

\bibitem{A2017}
L. Ambrozio,
\emph{On static three-manifolds with positive scalar curvature. }
J. Differential Geom. \textbf{107} (2017), no. 1, 1-45. 





\bibitem{B1987}
 A.L.  Besse, \emph{Einstein Manifolds.} Springer-Verlay, Berlin, 1987.

%
\bibitem{B2017}
 H.  Baltazar, \emph{On critical point equation of compact manifolds with zero radial weyl curvature.}  Geom. Dedic. \textbf{202} (2019), 337-355.
%

%
%

\bibitem{Bd2017}
A. Barros  and A. da Silva, \emph{Rigidity for critical metrics of the volume functional.} Math. Nachr. \textbf{292} (2019), no. 4, 709-719.




\bibitem{BDR2015}
 A. Barros, R. Di\'{o}genes and E. Ribeiro Jr., \emph{Bach-Flat Critical Metrics of the Volume Functional on 4-Dimensional Manifolds with Boundary.} J. Geom. Anal. \textbf{25} (2015), 2698-2715.




\bibitem{BDRR2017}
 R. Batista,   R. Di\'{o}genes, M. Ranieri  and E. Ribeiro Jr., \emph{Critical metrics of the volum functional on compact three-manifolds with smooth boundary.} J. Geom. Anal. \textbf{27} (2017), 1530-1547.



%
%
%


\bibitem{BR2017}
 H. Baltazar and E. Ribeiro Jr., \emph{Critical metrics of the volume functional on manifolds with boundary.} Proc.  Amer. Math. Soc. \textbf{145} (2017), 3513-3523.



\bibitem{BR2018}
 H. Baltazar  and E.  Ribeiro Jr., \emph{Remarks on critical metrics of the scalar curvature and volume functionals on compact manifollds with boundary.} Pacific J. Math. \textbf{297(1)}  (2018), 29-45.



\bibitem{C2001}
Q. M. Cheng, \emph{Compact locally conformally flat Riemannian manifolds.} Bull. London Math. Soc. 
\textbf{33} (2001), no. 4, 459-465. 



\bibitem{CC2012}
 H.-D. Cao and Q. Chen, 
 \emph{On locally conformally flat gradient steady solitons.} 
 Trans. Amer. Math. Soc. \textbf{364} (2012), 2377-2391.


\bibitem{CC2013}
 H.-D. Cao  and Q.  Chen, \emph{On Bach-flat gradient shrinking Ricci solitions.} Duke Math. J. \textbf{162} (2013), 1149-1169.

\bibitem{CCCMM2014}
 H.-D. Cao, G. Catino, Q. Chen, C. Mantegazza and L. Mazzieri, 
 \emph{Bach-flat gradient
steady Ricci solitons.} Calc. Var. PDE. \textbf{49} (2014), 125-138.


%
\bibitem{CEM2013}
 J. Corvino,  M. Eichmair,  and P. Miao,  \emph{Deformation of scalar curvature and volume.} Math. Ann. \textbf{357} (2013), 551-584.



\bibitem{CHY2012}
J. Chang, S. Hwang  and G. Yun, \emph{Critical point metrics of the total scalar curvature.} Bull. Korean Math. Soc. \textbf{49} (2012), 655-667.



\bibitem{CHY2014}
 J. Chang, S. Hwang and G. Yun, \emph{Total scalar curvature and harmonic curvature.} Taiwanese J. Math. \textbf{18} (2014), 1439-1458.



\bibitem{CHY2016}
 J. Chang, S. Hwang  and G. Yun, \emph{Erratum to: Total scalar curvature and harmonic curvature.} Taiwanese J. Math. \textbf{20(3)} (2016), 699-703.



\bibitem{FM1975}
 A. Fischer and J. Marsden, \emph{Deformations of the scalar curvature.} Duke Math. J. \textbf{42} (1975), 519-547.


\bibitem{FY2018}
Y.  Fang and W.  Yuan, \emph{Brown-York mass and positive scalar curvature II-Besse’s conjecture and related problems.}  Ann. Global Anal. Geom. \textbf{56} (2019), no. 1, 1-15.




\bibitem{H2000}
 S. Hwang, \emph{Critical points of the total scalar curvature functional on the space of metrics of constant scalar curvature.} Manuscripta Math. \textbf{103} (2000), 135-142.

\bibitem{HE1975}
 S. Hawking and G. Ellis, \emph{The Large Scale Structure of Space-Time.} Cambridge University Press, 1975.




\bibitem{HL2008}
 Z.J. Hu, H. Li and Udo Simon, \textit{Schouten curvature functions on locally conformally flat Riemannian manifolds.}  J. Geom. \textbf{88} (2008), 75-100.



%
%
%
%

\bibitem{HZ2018}
G. Huang and F. Zeng,
\emph{The classification of static spaces and related problems. }
Colloq. Math. \textbf{151} (2018), no. 2, 189-202. 











\bibitem{K1982}
 O. Kobayashi,   \emph{A differential equation arising from scalar curvature function.} J. Math. Soc. Japan. \textbf{34} (1982), no. 4, 665-675.



\bibitem{KO1981}
O. Kobayashi and M. Obata, \emph{Conformally-flatness and static space-time, in: Manifolds and Lie Groups.}  Progr. Math. \textbf{14} (1981), Birkh$\ddot{a}$user, 197-206.



\bibitem{KS2018}
J. Kim and J. Shin,  \emph{Four dimensional static and related critical spaces with harmonic curvature.} Pacific J. Math. \textbf{295(2)} (2018), 429-462.






\bibitem{L1980}
 L. Lindblom, \emph{Some properties of static general relativistic stellar models.} J. Math.
Phys. \textbf{21} (1980), 1455-1459.



\bibitem{L1983}
 J. Lafontaine, \emph{Sur la g$\acute{e}$om$\acute{e}$trie d’une g$\acute{e}$n$\acute{e}$ralisation de l'$\acute{e}$quation diff$\acute{e}$rentielle d'Obata.} J. Math. Pures Appl. \textbf{62} (1983), 63-72.

\bibitem{L2009}
 J. Lafontaine, \emph{A remark about static space times.} J. Geom. Phys. \textbf{59} (2009), 50-53.



\bibitem{L2015}
 B. Leandro, \emph{A note on critical point metrics of the total scalar curvature functional.} J. Math. Analysis and App. \textbf{424} (2015), 1544-1548.


\bibitem{MT2009}
 P.  Miao and L.-F. Tam, \emph{On the volume functional of compact manifolds with boundary with constant scalar curvature.} Calc. Var. PDE. \textbf{36} (2009), 141-171.

\bibitem{MT2011}
P. Miao  and L.-F. Tam， \emph{ Einstein and conformally flat critical metrics of the volume functional.} Trans. Amer. Math. Soc. \textbf{363} (2011), 2907-2937.







%
%










\bibitem{O1962}
M. Obata, \emph{Certain conditions for a Riemannian manifold to be isometric with a sphere.} J. Math. Soc. Japan \textbf{14(3)}  (1962), 333-340.


\bibitem{QY2013}
  J. Qing  and W. Yuan, \emph{A note on static spaces and related problems.} J. Geom. Phys. \textbf{74} (2013), 18-27.





\bibitem{QY2016}
J. Qing and W. Yuan, \emph{On scalar curvature rigidity of vacuum static spaces.} Math. Ann. \textbf{365} (2016), no. 3-4, 1257-1277. https://doi.org/10.1007/s00208-015-1302-0

\bibitem{SW2019}
W. Sheng and L. Wang, \emph{Critical metrics with cyclic parallel Ricci tensor for volume functional on manifolds with boundary.}
Geom. Dedic. \textbf{201} (2019), 243-251.




\bibitem{Y2015}
 W. Yuan, \emph{ The geometry of vacuum static spaces and deformations of scalar curvature.} Ph.D. thesis at
UC Santa Cruz (2015).




\end{thebibliography}
\end{document}